\def\sizefig{0.4}
\newenvironment{psmallmatrix}
  {\left(\begin{smallmatrix}}
  {\end{smallmatrix}\right)}
\newcommand{\new}[1]{\textcolor{black}{#1}}
\newcommand{\R}{\mathbb{R}}
\newcommand{\F}{\mathbb{F}}
\newcommand{\Mc}{\mathcal{M}}
\newcommand{\T}{\text{T}}
\newcommand{\N}{\mathbb{N}}
\newcommand{\x}{\mathbf{x}}
\newcommand{\e}{\mathbf{e}}
\newcommand{\y}{\mathbf{y}}
\newcommand{\z}{\mathbf{z}}
\def\L{\mathbf{L}}
\newcommand{\M}{\mathbf{M}}
\def\Rb{\mathbf{R}}
\def\B{\mathbf{B}}
\def\E{\mathbf{E}}
\def\I{\mathbf{I}}
\def\K{\mathbf{K}}
\def\bone{\mathbf{1}}
\def\bzero{\mathbf{0}}
\def\X{\mathbf{X}}
\newcommand{\A}{\mathbf{A}}
\renewcommand{\prec}{\text{prec}}
\DeclareMathOperator{\diag}{diag}
\newcommand{\iaboundfun}[2]{\mathtt{ia\_bound}(#1, #2)}
\newcommand{\iabound}{\mathtt{ia\_bound}}
\newcommand{\lowerboundfun}[3]{\mathtt{sdp\_bound}(#1, #2, #3)}
\newcommand{\lowerbound}{\mathtt{sdp\_bound}}
\newcommand{\sthreefp}{\mathtt{s3fp}}
\newcommand{\fpbern}{\mathtt{FPBern}}
\newcommand{\nlopt}{\mathtt{NLopt}}
\newcommand{\mvbeta}{\mathtt{mvbeta}}
\newcommand{\geneig}{\mathtt{geneig}}
\newcommand{\robustsdp}{\mathtt{robsdp}}
\newcommand{\realtofloat}{\mathtt{Real2Float}}
\newcommand{\fpsdp}{\mathtt{FPSDP}}
\newcommand{\matlab}{\textsc{Matlab}}
\newcommand{\yalmip}{\textsc{Yalmip}}
\newcommand{\mosek}{\textsc{Mosek}}
\newcommand{\bop}{\mathtt{bop}}
\newcommand{\coq}{\text{\sc Coq}}
\newcommand{\rosa}{\mathtt{Rosa}}
\newcommand{\fptaylor}{\mathtt{FPTaylor}}
\newcommand*{\circled}{\@ifstar\circledstar\circlednostar}
\newcommand*{\squared}{\@ifstar\squaredstar\squarednostar}
\newcommand*\circledstar[1]{%
  \tikz[baseline=(C.base)]
    \node[%
      fill,
      circle,
      minimum size=1.em,
      text=white,
      inner sep=0.5pt
    ](C) {\texttt{#1}};%
}
\newcommand*\circlednostar[1]{%
  \tikz[baseline=(C.base)]
    \node[%
      draw,
      circle,
      minimum size=1.em,
      inner sep=0.5pt
    ](C) {\texttt{#1}};%
}
\newcommand*\squaredstar[1]{%
  \tikz[baseline=(C.base)]
    \node[%
      fill,
      rectangle,
      minimum size=1.em,
      text=white,
      inner sep=0.5pt
    ](C) {\texttt{#1}};%
}
\newcommand*\squarednostar[1]{%
  \tikz[baseline=(C.base)]
    \node[%
      draw,
      rectangle,
      minimum size=1.em,
      inner sep=0.5pt
    ](C) {\texttt{#1}};%
}
\newtheorem{property}[theorem]{Property}
\newtheorem{theorem}{Theorem}[section]
\newtheorem{lemma}[theorem]{Lemma}
\newtheorem{property}[theorem]{Property}
\theoremstyle{plain}
\newtheorem{example}{Example}
\begin{document}


\title{Interval Enclosures of Upper Bounds of Roundoff
Errors using Semidefinite Programming}

\author{VICTOR MAGRON
\affil{CNRS Verimag}
}

\if{
\author[verimag]{Victor Magron\corref{cor}}
\ead{victor.magron@imag.fr}

\cortext[cor]{Corresponding author}
\address[verimag]{CNRS Verimag; 700 av Centrale 38401 Saint-Martin d'Hères, France}
}\fi


\begin{abstract}
A longstanding problem related to floating-point implementation of numerical programs is to provide efficient yet precise analysis of output errors.

We present a framework to compute lower bounds on \new{largest} absolute roundoff errors, for a particular rounding model. This method applies to numerical programs implementing polynomial functions with box constrained input variables. 
\new{Our study is based on three different hierarchies, relying respectively on generalized eigenvalue problems, elementary computations and  semidefinite programming (SDP) relaxations.} This is complementary of over-approximation frameworks, consisting of obtaining upper bounds \new{on the largest absolute} roundoff error. Combining the results of both frameworks allows to get  enclosures for upper bounds on roundoff errors.

\new{The under-approximation framework provided by the third hierarchy is based on a new sequence of convergent robust SDP approximations for certain classes of polynomial optimization problems. Each problem in this hierarchy can be solved exactly} via SDP. By using this hierarchy, one can provide a monotone non-decreasing sequence of lower bounds converging  to the absolute roundoff error of a program implementing a polynomial function, applying for a particular rounding model.

We investigate the efficiency and precision of our method on non-trivial polynomial programs coming from space control, optimization and computational biology.
\end{abstract}

\ccsdesc[500]{Design and analysis of algorithms~Approximation algorithms analysis}
\ccsdesc[300]{Design and analysis of algorithms~Numeric approximation algorithms}

\ccsdesc[500]{Design and analysis of algorithms~Mathematical optimization}
\ccsdesc[300]{Design and analysis of algorithms~Continuous optimization}
\ccsdesc[100]{Design and analysis of algorithms~Semidefinite programming}
\ccsdesc[100]{Design and analysis of algorithms~Convex optimization}

\ccsdesc[500]{Logic~Automated reasoning}

\if{
\begin{keyword}
Roundoff Error \sep Polynomial Optimization \sep  Semidefinite Program  \sep Floating-point \sep Generalized Eigenvalue  \sep Robust Optimization
\end{keyword}
}\fi

\keywords{roundoff error, polynomial optimization,  semidefinite programming, floating-point arithmetic, generalized eigenvalues, robust optimization}

\acmformat{Victor Magron, 2017. 
Interval Enclosures of Upper Bounds of Roundoff
Errors using Semidefinite Programming.}

\begin{bottomstuff}
This work has been partially supported by the LabEx PERSYVAL-Lab (ANR-11-LABX-0025-01) funded by the French program ``Investissement d'avenir'' and by the European Research Council (ERC) ``STATOR'' Grant Agreement nr. 306595.

Author's addresses: V. Magron, CNRS Verimag, 700 av Centrale 38401 Saint-Martin d'Hères FRANCE.
\end{bottomstuff}

\maketitle
\section{INTRODUCTION}
\label{sec:intro}
Over the last four decades, numerical programs have extensively been written and executed with finite precision implementations~\cite{Dekker71}, often relying on  single or double floating-point numbers to perform fast computation. A ubiquitous related issue, especially in the context of critical system modeling, is to precisely analyze the absolute gap between the real and floating-point output of such programs. 
The existence of a possibly high roundoff error gap is a consequence of multiple rounding occurrences, happening most likely while performing operations with finite precision systems, such as IEEE 754 standard arithmetic~\cite{IEEE}.

The present study focuses on computing a lower bound on the  \new{largest}  absolute roundoff error for a particular rounding model, while executing a program implementing a multivariate polynomial function $f$, \new{with a priorly fixed bracketing}. For these programs, each input variable takes a value within a given closed interval. 
\new{We consider a simple (multiplicative) rounding model of the variables and elementary operations involved in $f$. Executing the program in floating-point precision  leads to the computation of a rounded expression $\hat{f}(\x,\e)$ depending on the input variables $\X$ and additional roundoff error variables $\e := (e_1,\dots,e_m)$.  With unit roundoff $\varepsilon$, the value of each $e_j$ can take values in $[-\varepsilon,\varepsilon]$.
Stated formally, our goal is to compute a lower bound on the largest absolute value of the expression $\hat{f}(\x,\e) - f(\x)$ for all possible values of $\x$ in $\X$ and each possible value of $e_j$ in $[-\varepsilon,\varepsilon]$.
}
Exact resolution of this problem is nontrivial as it requires to compute the maximum of a polynomial, which is \new{known} to be NP-hard~\cite{laurent2009sums} in general.


%
Several existing methods allow to obtain {\em lower bounds} of roundoff errors. The easiest way to obtain such a bound on the maximum of a given function is to evaluate this function at several points within the function input domain before taking the minimum over all evaluations.
Testing approaches aim at finding the inputs causing the worst error. 
\new{Such techniques often rely on guided random testing as in $\sthreefp$~\cite{Chiang14s3fp}, or heuristic search as in {\sc Precimonious}~\cite{Precimonious}, {\sc CORAL}~\cite{Borges12Test}. 
}

\new{The $\sthreefp$ tool implements the so-called {\em Binary Guided Random Testing} (BGRT) method. BGRT relies on shadow value executions and configuration evaluations. A shadow value execution is the execution of a program under certain precision settings to compute either absolute or relative roundoff errors.  A configuration is a mapping from program inputs to corresponding range of values.  An initial configuration is splitted recursively into tighter configurations, where tightness is determined thanks to shadow value executions. The BGRT algorithm starts with a configuration, enumerates a sub-part of its tighter configurations, in order to pick the set of inputs causing  the (locally) maximial high floating-point errors.
For more details, we refer the interested reader to~\cite[Section~3]{Chiang14s3fp}.
}

\new{
The {\sc Precimonious} tool aims at assisting users to execute numerical programs with priorly prescribed accuracy in a more efficient way. For this, the tool performs automated tuning of the floating-point precision related to the elementary operations involved in the program. In the best scenario, {\sc Precimonious} outputs a program with an optimial configuration, that is the setting which uses the least bit precisions resulting in the best performance improvement over all complying configurations. This goal is pursued by performing local search over a subset of the input variables provided by the user. {\sc Precimonious} relies on the delta-debugging algorithm for local search. This consists of dividing the sets of possible configuration changes and increasing the number of subsets inductively when no improvement occurs. }
\new{
While {\sc Precimonious} does not take into account the correlation between variables, {\sc CORAL} relies on meta-heuristic solvers based on genetic algorithms~\cite{Goldberg89} and particle-swarm optimization~\cite{Kennedy95} to handle complex mathematical constraints. 
}

Lower bound computed with testing are complementary with tools providing validated {\em upper bounds}. These tools are mainly based on interval arithmetic (e.g. {\sc Gappa}~\cite{Daumas10}, {\sc Fluctuat}~\cite{fluctuat}, $\rosa$~\cite{Darulova14Popl}) or  methods coming from global optimization such as Taylor approximation in $\fptaylor$ by~\cite{fptaylor15}, Bernstein expansion in $\fpbern$ by~\cite{tacas16}. The recent framework by~\cite{toms16}, related to the $\realtofloat$ software package, employs semidefinite programming (SDP) to obtain a hierarchy of upper bounds converging to the absolute roundoff error. 
This hierarchy is derived from the general moment-sum-of-squares hierarchy (also called Lasserre's hierarchy) initially provided by~\cite{Lasserre01moments} in the context of polynomial optimization. 
\new{At each step of Lasserre's hierarchy, one can either rely on moments or sum-of-squares (SOS) to compute a certified upper bound on the maximum (or similary a lower bound on the minimum) for a given objective polynomial function $f$ under a set of polynomial inequality constraints $\K$. In the unconstrained case, the underlying idea  is that if one can decompose $f$ into a sum of squares (SOS) then it is straighforward to prove that this polynomial is nonnegative. 
In the constrained case, the idea is to write $f$ as a weighted SOS decomposition, where the weights are the polynomials involved in set of constraints $\K$. This also proves that this polynomial is nonnegative on $\K$. After fixing the maximal degree of the SOS polynomials, computing their coefficients boils down to solving a semidefinite program (SDP). }
\new{An SDP problem involves a linear objective function with constraints over symmetric matrices with nonnegative eigenvalues. It can be solved with interior-point methods, yielding polynomial time algorithms at prescribed accuracy. 
For more details about applications of SDP together with complexity estimates, we refer to~\cite{NN-94,Vandenberghe94SDP,deKlerkSDP}. }
\new{A well-known limitation of the Lasserre 's hierarchy is due to the size of the SDP matrices involved in the SOS decompositions. For a system involving polynomials with $n$ variables of maximal degree $k$, this size grows rapidly as it is proportional to $\binom{n+k}{n}$. To overcome these limitations, several research efforts have been pursued to take into account the properties of certain classes of structured systems, e.g. sparsity~\cite{Waki06SparseSOS,Las06SparseSOS} or symmetry~\cite{Riener2013SymmetricSDP}. In particular, previous work by the author~\cite{toms16} exploits the special structure of the roundoff error function by applying the sparse variant of the first Lasserre's hierarchy to the linear part.}

While the first SDP hierarchy allows to approximate from above the maximum of a polynomial,~\cite{Lasserre11} provides a second complementary SDP hierarchy, yielding a sequence of converging lower bounds. 
\new{At each step of the second hierarchy, the lower bound on the maximum of a given polynomial is computed by solving a so-called {\em generalized eigenvalue problem}. Given two symmetric matrices $\A$ and $\B$ with known entries, this consists of finding the smallest value of $\lambda$ such that the matrix $\lambda \A - \B$ has only nonnegative eigenvalues. In our context, the two matrices encode certain information regarding the moments of some probability measure $\mu$ supported on the set of constraints $\K$. For instance, in the bivariate case, the entries of these matrices at the second step of the hierarchy necessarily depend on the value of the integrals $\int_\K y_1 d \mu$, $\int_\K y_2 d \mu$, $\int_\K y_1^2 d \mu$, $\int_\K y_1 y_2 d \mu$ and $\int_\K y_2^2 d \mu$. In several cases, the value of these integrals are available analytically. This includes the case where $\mu$ is the uniform (also called Lebesgue) measure and $\K$ is the unit box $[0, 1]^n$ (or any product of real closed intervals), the simplex or the euclidean ball. The interested reader can find more details about these closed formula in~\cite{Grundmann78,Parsimony16,deKlerk16}. }
\new{By contrast with the first Lasserre's hierarchy, the second one cannot easily handle the case where $\K$ is defined by a general set of polynomial inequality constraints. For instance, computing the moments of the uniform measure on a polytope is NP-hard (see e.g.~\cite{DeLoera10}). This is still an open problem to design a hierarchy yielding certified lower bounds in the general case. To the best of our knowledge, there is also no variant of the second Lasserre's hierarchy exploiting the properties of special problems.}

\new{Several efforts have been made to provide convergence rates  for the two hierarchies. For the first hierarchy, the theoretical estimates from~\cite{Nie07Putinar} yield convergence rates of $O(1/\sqrt[c]{\log (2 k/c)})$, where $c$ is a constant depending only on $\K$ (but not explicitely known) and $k$ is the selected step of the hierarchy. This yields very pessimistic bounds by contrast with the results obtained for practical case studies. The situation is rather different for the complexity analysis~\cite{deKlerk16} of the second Lasserre's hierarchy as the convergence rates are no worse than $O(1/\sqrt{r})$ and often match practical experiments. Investigating the gap between the two hierarchies, either from a theoretical or practical point of view, could provide insights on how to improve the estimates of the first Lasserre's hierarchy.}

\new{
Following this line of research, the motivation of this paper relates both to the roundoff error analysis and the use of SDP relaxations dedicated to sparse polynomial problems. On the one hand, we focus on deriving a sparse variant of the second Lasserre's hierarchy for the special case of roundoff error computation. On the other hand, we aim at providing insights regarding the gap between this variant and the hierarchy of upper bounds from~\cite{toms16}. 
}
\paragraph{\textbf{Contributions}}
We provide an SDP hierarchy inspired from~\cite{Lasserre11} to obtain a sequence of converging lower bounds on the \new{largest} absolute roundoff error obtained with a particular rounding model. This hierarchy and the one developed in~\cite{toms16} complement each other as  the combination of both now allows to enclose the \new{largest} absolute roundoff error in smaller and smaller intervals. 

We release a software package called $\fpsdp$\footnote{\url{https://github.com/magronv/FPSDP}} implementing this SDP hierarchy.

The rest of the article is organized as follows: in Section~\ref{sec:background} we provide preliminary background about floating-point arithmetic and SDP, allowing to state the considered problem of roundoff error. This problem is then addressed in Section~\ref{sec:robustsdp} with our SDP hierarchy of converging lower bounds. Section~\ref{sec:benchs} is devoted to numerical experiments in order to compare the performance of our $\fpsdp$ software with existing tools.

\section{FLOATING-POINT ARITHMETIC AND SEMIDEFINITE PROGRAMMING}
\label{sec:background}
\subsection{Floating-Point Arithmetic and Problem Statement}
\label{sec:fp_pb}
Let us denote by $\varepsilon$ the \new{machine epsilon or unit roundoff}, $\R$ the field of real numbers and $\F$ the set of binary floating-point numbers. Both overflow and \new{subnormal} range values are neglected.
Under this assumption, any real number $x \in \R$ is approximated with its closest floating-point representation $\hat{x} = x (1 + e)$, with $|e| \leq \varepsilon$ and $\hat{\cdot}$ being the rounding operator. 
\new{This can be selected among either rounding toward zero, rounding toward $\pm\infty$ or rounding to nearest. From now on, for the sake of simplicity we only consider rounding to nearest}.  
We refer to~\cite{higham2002accuracy} for related background. 

The number $\varepsilon := 2^{-\prec}$ bounds from above the relative floating-point error, with $\prec$ being called the {\em precision}. For single (resp.~double) precision floating-point, the value of the \new{unit roundoff} is $\varepsilon = 2^{-24}$ (resp.~$\varepsilon = 2^{-53}$).

To comply with IEEE 754 standard arithmetic~\cite{IEEE}, for each real-valued operation $\bop_\R \in \{+, -, \times, \slash \}$, the result of the corresponding floating-point operation $\bop_\F \in \{\oplus, \ominus, \otimes, \oslash \}$ satisfies:
\begin{equation}
\label{eq:roundbop}
\bop_\F \, (\hat{x}, \hat{y}) = \bop_\R \, (\hat{x}, \hat{y}) \, (1 + e) \enspace, \quad |e| \leq \varepsilon = 2^{-\prec} \enspace.
\end{equation}

\paragraph{\textbf{Semantics}} Our program semantics is based on the encoding of  polynomial expressions in the~$\realtofloat$ software~\cite{toms16}. The input variables of the program are constrained within interval floating-point bounds.

We denote by \texttt{C} the type for numerical constants, being chosen between double precision floating-point and arbitrary-size rational numbers. This type \texttt{C} is used for the interval bounds and for the polynomial coefficients.

As in~\cite[Section 2.1]{toms16}, the type \texttt{pexprC} of polynomial expressions is the following inductive type:
\begin{lstlisting}
type pexprC = Pc of C | Px of positive | $-$ pexprC 
| $\,$pexprC$\,-\,$pexprC  | pexprC$\,+\,$pexprC | pexprC$\, \times \,$pexprC
\end{lstlisting}
The constructor \texttt{Px} allows to represent any input variable $x_i$ with the positive integer $i$.

\paragraph{\textbf{Interval enclosures for bounds of roundoff errors}}
Let us consider a program implementing a polynomial function $f(\x)$ of type $\texttt{pexprC}$ (with the above semantics), which depends on input variables $\x := (x_1, \dots, x_n)$ constrained in a box, i.e.~a product of closed \new{(real)} intervals $\X := [\underline{x_1}, \overline{x_1}] \times \dots [\underline{x_n}, \overline{x_n}]$.
After rounding each coefficient and elementary operation involved in $f$, we obtain a polynomial rounded expression denoted by $\hat{f}(\x, \e)$, which depends on the input variables $\x$ as well as additional roundoff error variables $\e := (e_1, \dots, e_m)$. Following~\eqref{eq:roundbop}, each variable $e_i$ belongs to the interval $[-\varepsilon, \varepsilon]$, thus $\e$ belongs to $\E := [-\varepsilon, \varepsilon]^m$.

Here, we are interested in bounding from below the absolute roundoff error $|r(\x, \e)| := | \hat{f}(\x, \e) - f (\x) |$ over  all possible input variables $\x \in \X$ and roundoff error variables $\e \in \E$. 
Let us define $\K := \X \times \E$ and let $r^\star$ stands for the maximum of $|r(\x, \e)|$ over $\K$, that is $r^\star := \max_{(\x, \e) \in \K} | r(\x, \e)|$. 

Note that when $\e = 0$ and $\x$ corresponds to floating-point input, the value of $| \hat{f}(\x, \e) - f (\x) |$ is \new{zero}, yielding the valid lower bound $0 \leq r^\star$. For instance, let us consider $f(x) = x/2$, for $x \in [1, 2]$. When $x$ is a
floating-point number, this function has no roundoff error. 
However, higher lower bounds can be obtained when such conditions are not fulfilled, e.g.~for non floating-point input values. 

Following the same idea used in~\cite{fptaylor15,toms16}, we first decompose the error term $r$ as the sum of a term $l(\x,\e)$, which is linear w.r.t.~$\e$, and a nonlinear term $h(\x,\e) := r(\x,\e) - l(\x,\e)$. 
Then a valid lower bound on $r^\star$ can be derived by using the reverse triangular inequality:
\begin{align}
\begin{split}
\label{eq:lhoptim} 
r^\star \geq \max_{(\x, \e) \in \K} |l(\x, \e)| - \max_{(\x, \e) \in \K} |h(\x, \e)| =:  l^\star - h^\star  \enspace.
\end{split}
\end{align}
We emphasize the fact that $h^\star$ is \new{in general} negligible compared to $l^\star$ since $h$ contains products of error terms with degree at least 2 (such as $e_i e_j$), thus can be bounded by $O(\epsilon^2)$. This bound is likely much smaller than the roundoff error induced by the linear term $l$.
To compute a bound on $h^\star$, it is enough in practice to compute second-order derivatives of $r$ w.r.t.~$\e$ then use Taylor-Lagrange inequality to get an interval enclosure of $h$ as in~\cite{fptaylor15} or~\cite{toms16}. Doing so, one obtains an upper bound on $h^\star$. \new{In practice, we compute $h^\star$ with the implementation of $\iabound$ available in the $\realtofloat$ software package.}

Then, subtracting this upper bound to any lower bound on $l^\star$ yields a valid lower bound on $r^\star$ for the particular multiplicative rounding model which is considered here. 
Hence, from now on, we focus on approximating the bound $l^\star$ of the linear term.
\begin{figure}[!t]
\begin{algorithmic}[1]                    
\Require input variables $\x$, input box $\X$, polynomial $f$, rounded polynomial $\hat{f}$, error variables $\e$, error box $\E$, relaxation procedure $\lowerbound$, relaxation order $k$
\Ensure lower bound on the \new{largest} absolute roundoff error $\mid \hat{f} - f \mid$ over $\K := \X \times \E$
\State Define the absolute error $r(\x, \e) := \hat{f}(\x,\e) - f(\x)$ \label{line:r}
\State Compute $l(\x,\e) := \sum_{j=1}^m \frac{\partial r(\x,\e)} {\partial e_j} (\x,0) \, e_j$ and $h := r - l$ \label{line:l}
\State Compute an upper bound on $h^\star$: $\overline{h} := \iaboundfun{h}{\K}$ \label{line:iabound}
\State Compute a lower bound on $\overline{l}$ : $\overline{l}_k := \lowerboundfun{l}{\K}{k}$  \label{line:lowerbound}
\State Compute an upper bound on $\underline{l}$: $\underline{l}_k := - \lowerboundfun{-l}{\K}{k}$  \label{line:upperbound}
\State Compute a lower bound on $l^\star$ : $l_k := \new{\max \{ -\underline{l}_k,  \overline{l}_k\}} $ 
\State \Return $\new{\max\{l_k-  \overline{h},0 \}} $ \label{line:fpsdpbound}
\end{algorithmic}
\caption{\texttt{fpsdp}: our algorithm to compute lower bounds of absolute roundoff errors for polynomial programs.}
\label{alg:fpsdp}
\end{figure}
The framework~\cite{toms16} allows to obtain a hierarchy of converging upper bounds of $l^\star$ using SDP relaxations. By contrast with~\cite{toms16}, our goal is to compute a hierarchy of converging lower bounds on $l^\star$.
For the sake of clarity, we define $\underline{l} := \min_{(\x, \e) \in \K} l(\x, \e)$ and $\overline{l} := \max_{(\x, \e) \in \K} l(\x, \e)$.
Computing $l^\star$ can then be cast as follows:
\begin{align}
\begin{split}
\label{eq:loptim}
l^\star := \max_{(\x, \e) \in \K} | l(\x, \e) | =  \max \{
|\underline{l}| , 
|\overline{l}|
\} \,.
\end{split}
\end{align}
Note that the computation of $\underline{l}$ can be formulated as a maximization problem since $\underline{l} := \min_{(\x, \e) \in \K} l(\x, \e) = - \max_{(\x, \e) \in \K} - l(\x, \e) $. 
Thus, any method providing lower bounds on $\overline{l}$ can also provide upper bounds on $\underline{l}$, eventually yielding lower bounds on $l^\star$. 

We now present our main \texttt{fpsdp} algorithm, given in Figure~\ref{alg:fpsdp}. This procedure is similar to the algorithm implemented in the upper bound tool $\realtofloat$~\cite{toms16}, except that we obtain lower bounds on absolute roundoff errors. 
Given a program implementing a polynomial $f$ with input variables $\x$ being constrained in the box $\X$, the \texttt{fpsdp} algorithm takes as input $\x$, $\X$, $f$, the rounded expression $\hat{f}$ of $f$, the error variables $\e$ as well as the set $\E$ of bound constraints over $\e$.
The roundoff error $r := \hat{f} - f$ (Line\lineref{line:r}) is decomposed as the sum of a polynomial $l$ which is linear w.r.t.~the error variables $\e$ and a remainder $h$. 
As in~\cite{toms16,fptaylor15}, we obtain $l$ by computing the partial derivatives of $r$ w.r.t.~$\e$ (Line\lineref{line:l}).  The computation of the upper bound on $h^\star$ (Line\lineref{line:iabound}) is performed as explained earlier on, with the so-called procedure $\iabound$ relying on basic interval arithmetic. 
Our algorithm also takes as input a $\lowerbound$ procedure,  which computes lower bounds of the maximum of polynomials.
In our case, we use $\lowerbound$ in Line\lineref{line:lowerbound} (resp.~Line\lineref{line:upperbound}) to compute a lower (resp.~upper) bound on $\overline{l}$ (resp.~$\underline{l}$). 
\new{Since zero is a valid lower bound for the largest absolute roundoff error, we return in Line\lineref{line:fpsdpbound} the maximal value between zero and the bound provided by \texttt{sdp\_bound}. This ensures that the \texttt{fpsdp} algorithm cannot return wrong results even if \texttt{sdp\_bound} returns bad error estimates.
}

In the sequel, we describe three possible instances of $\lowerbound$: \new{the first one relies on a hierarchy of generalized eigenvalue problems, the second one provides a hierarchy of bounds using only elementary computations and the third one is based on a hierarchy of semidefinite programming (SDP) relaxations. These three methods are described respectively} in Section~\ref{sec:geneig}, Section~\ref{sec:mvbeta} and Section~\ref{sec:robustsdp}. Each step of these  hierarchies is indexed by an integer $k$, called {\em relaxation order} and given as input to \texttt{fpsdp}.
\subsection{Existing Hierarchies of Lower Bounds for Polynomial Maximization}
\label{sec:lowerpop}
Here, we recall mandatory background explaining how to obtain  hierarchies of lower bounds for a given polynomial maximization problem~\cite{Lasserre11}.
Given $p \in \R[\y]$ a multivariate polynomial in $N$ variables $y_1, \dots, y_N$ and a box $\K := [\underline{y_1}, \overline{y_1}] \times \cdots \times [\underline{y_N}, \overline{y_N}]$, one considers the following polynomial maximization problem:
\begin{equation}
\label{eq:maxpop}
p^*  :=  \max_{\y \in \K} p (\y)  \,.
\end{equation}
The set of box constraints $\K \subseteq \R^N$ is encoded by
\[
\K := \{ \y \in \R^{N} : g_1 (\y) \geq 0, \dots, g_{N} (\y) \geq 0\} \enspace,
\]
for  polynomials $g_1 := (y_1 - \underline{y_1})(\overline{y_1} - y_1), \dots, g_N:= (y_N - \underline{y_N})(\overline{y_N} - y_N)$. 

For a given vector of $N$ nonnegative integers $\alpha \in \N^N$, we use the notation $\y^\alpha := y_1^{\alpha_1} \cdots y_N^{\alpha_N}$ and $|\alpha| := \sum_{i=1}^N \alpha_i$. Any polynomial $p \in \R[\y]$ of \new{total degree at most} $k$ can then be written as $p(\y) = \sum_{|\alpha| \leq k} p_{\alpha} \y^{\alpha}$. We write $\N_k^N := \{ \alpha \in \N^N : |\alpha| \leq k \}$. The cardinal of this set is equal to \new{$\binom{N+k}{k}= \frac{(N+k)!}{N! \, k!}$}.

We recall that a finite Borel measure $\mu$ on $\R^N$ is a nonnegative set function such that $\mu (\emptyset) = 0$, $\mu(\R^N)$ is finite, and $\mu$ is countably sub-additive. The support of $\mu$ is the smallest closed set $\K \subseteq \R^N$ such that $\mu (\R^N \backslash \K ) = 0$ (see~\cite{Royden88} for more details).

Let $\mu$ be a given finite Borel measure supported on $\K$ and $\z$ be the sequence of moments of $\mu$, given by $z_{\alpha} := \int_{\K} \y^\alpha d \mu(\y)$ for all $\alpha \in \N^N$. 
In some cases, one can explicitly compute $z_{\alpha}$ for each $\alpha \in \N^{N}$.
This includes the case when $\mu$ is the uniform measure with density 1, i.e.~$d \mu(\y) = d \y$, as $\K$ is a product of closed intervals.
For instance with $N = 2$, $\K = [0, 1]^2$ and $\alpha = (1, 0)$, one has $ z_{1,0} = \int_{\K}  y_1 \, d \y = \frac{1}{2}$.
With $\alpha = (2,1)$, one has $z_{2,1} = \int_{\K}  y_1^2 \, y_2 \, d \y = \frac{1}{3} \times \frac{1}{2} = \frac{1}{6}$. 

Given a real sequence $\z =(z_{\alpha})$, we define the multivariate linear functional $L_\z : \R[\y] \to \R$ by $L_\z(p) := \sum_{\alpha} p_{\alpha} z_{\alpha}$, for all $p \in \R[\y]$. 
For instance if $p(\y) := y_1^2 y_2 + 3 y_1 - \frac{2}{3} $, $\K = [0, 1]^2$ then $L_\z(p) = z_{2, 1} + 3 z_{1, 0} - \frac{2}{3} z_{0, 0} = \frac{1}{6} + \frac{3}{2} - \frac{2}{3} = 1$.
\paragraph*{Moment matrix}
The {\it moment} matrix $\M_k(\z)$ is the real symmetric matrix with rows and columns indexed by $\N_k^N$ associated with a sequence
$\z =(z_{\alpha})$, whose entries are defined by: 
\[ 
\M_k(\z)(\beta, \gamma) := L_{\z}(\y^{\beta + \gamma})  \,, \quad
\forall \beta, \gamma \in \N_k^N \,.  
\]
\new{
We rely on the graded lexicographic order to compare the elements of $\N_k^N$. That is, we use the order which first compares the total degree (sum of all entries), and in case of a tie apply lexicographic order. For $N= k = 2$, this gives $(0,0) < (1,0) < (0,1) < (2,0) < (1,1) < (0,2)$.
}

\paragraph*{Localizing matrix}
The {\it localizing} matrix associated with a sequence
$\z = (z_{\alpha})$ and a polynomial $p \in\R[\y]$ (with $p(\y)=\sum_{\alpha} p_{\alpha} \y^\alpha$)
is the real symmetric matrix $\M_k(p \, \z)$ with rows and columns indexed by $\N_k^N$, and whose entries are defined by: 
\[ 
\M_k(p \, \z) (\beta,\gamma) := \new{L_\z (p(\y) \, \y^{\beta + \gamma})}, \quad
\forall \beta, \gamma \in \N_k^N \,.
\]
The size of $\M_k(p \, \z)$ is equal to the cardinal of $\N_k^N$, i.e.~$\binom{N+k}{k}$. Note that when $p = 1$, one retrieves the moment matrix as \new{a} special case of localizing matrix. 
\begin{example}
\label{ex:matloc}
With $p(\y) := y_1^2 y_2 + 3 y_1 - \frac{2}{3} $, $\K = [0, 1]^2$ and $k=1$, one has 
$\M_1(\z) = \begin{psmallmatrix}
1 & \frac{1}{2} & \frac{1}{2} \\
\frac{1}{2} & \frac{1}{3} & \frac{1}{4} \\
\frac{1}{2} & \frac{1}{4} & \frac{1}{3}
\end{psmallmatrix} $
and $\new{\M_1(p \, \z)} = \begin{psmallmatrix}
1 & \frac{19}{24} & \frac{19}{36} \\
\frac{19}{24} & \frac{113}{180} & \frac{5}{12} \\
\frac{19}{36} & \frac{5}{12} & \frac{13}{36}
\end{psmallmatrix} $.
\new{Here the elements of $\N_1^2$ indexing the rows and columns of both matrices are $(0,0)$, $(1,0)$ and $(0,1)$, corresponding to the monomials $1$, $y_1$ and $y_2$, respectively.
}
For instance, the bottom-right corner of the localizing matrix $\M_1(\z)$ is obtained by computing $L_\z ( p(\y) \, y_2^2 ) = z_{2,3} + 3 z_{1,2} - \frac{2}{3} z_{0,2} = \frac{1}{12} + \frac{1}{2} - \frac{2}{3}\times\frac{1}{3} = \frac{13}{36}$.
\end{example}
Next, we briefly recall two existing methods to compute lower bounds of $p^\star$ as defined in~\eqref{eq:maxpop}.
\subsubsection{Hierarchies of generalized eigenvalue problems}
\label{sec:geneig}
Let us denote by $\R^{n \times n}$ the vector space of $n \times n$ real matrices.
For a symmetric matrix $\M \in \Mc_{n}(\R)$, the notation $\M \succeq 0$ \new{means that} $\M$ is semidefinite positive (SDP), i.e.~has only nonnegative eigenvalues. The notation $\A \succeq \B$ stands for $\A - \B \succeq 0$.
A semidefinite optimization problem is an optimization problem where the cost is a linear function and the constraints state that some given matrices are semidefinite positive (see~\cite{Vandenberghe94SDP} for more details about SDP).

The following sequence of SDP programs can be derived from~\cite{Lasserre11}, for each $k \in \N$:
\begin{equation}
\label{eq:supgeneigpop}
\begin{aligned}
\lambda_k (p) := \min\limits_{\lambda} \quad & \lambda \\			
\text{s.t.} 
\quad &  \lambda \, \M_k (\z) \succeq \M_k (p \, \z) \,, \\
\quad & \lambda \in \R \,.
\end{aligned}
\end{equation}
The only variable of Problem~\eqref{eq:supgeneigpop} is $\lambda$ together with a single SDP constraint of size $\binom{N+k}{N}$. 
This constraint can be rewritten as $ \, \M_k ( (\lambda - p) \z) \succeq 0$ by linearity of the localizing matrices.
Solving Problem~\eqref{eq:supgeneigpop} allows to obtain a non-decreasing sequence of lower bounds which converges to the global \new{maximum} $p^\star$ of the polynomial $p$. Problem~\eqref{eq:supgeneigpop}  is a {\em generalized eigenvalue} problem. As mentioned in~\cite[Section 2.3]{mvbeta16}, the 
computation of the number $\lambda_k(p)$ requires at most $O \bigl( \binom{N + k}{k}^3  \bigr) $ floating-point operations (flops).
\begin{theorem}{(~\cite[Theorem 4.1]{Lasserre11})}
\label{th:supgeneigpop}
For each $k \in \N$, Problem~\eqref{eq:supgeneigpop} admits an optimal solution $\lambda_k(p)$. Furthermore, the sequence $(\lambda_k(p))$ is monotone non-decreasing and $\lambda_k(p) \uparrow p^\star$ as $k \to \infty$.
\end{theorem}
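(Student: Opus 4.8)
The plan is to establish the three assertions---attainment, monotonicity, and convergence---separately, the first two being routine and the last requiring a moment-theoretic argument. First I would record the dictionary between the matrix inequality in~\eqref{eq:supgeneigpop} and integral inequalities. Since $\K$ is a box it has nonempty interior, so any polynomial vanishing $\mu$-almost everywhere vanishes on $\supp\mu=\K$ and is therefore identically zero; this shows $\M_k(\z)\succ0$ for every $k$. Writing a polynomial $q$ of degree at most $k$ as $q(\y)=\sum_{|\alpha|\le k}v_\alpha\y^\alpha$ with coefficient vector $v$ indexed by $\N_k^N$, one has the identity
\[
v^{\transpose}\M_k\bigl((\lambda-p)\,\z\bigr)\,v \;=\; L_{\z}\bigl((\lambda-p)\,q^2\bigr)\;=\;\int_{\K}\bigl(\lambda-p(\y)\bigr)\,q(\y)^2\,d\mu(\y)\enspace.
\]
Hence $\lambda$ is feasible for~\eqref{eq:supgeneigpop} if and only if $\int_{\K}(\lambda-p)\,q^2\,d\mu\ge0$ for all $q\in\R[\y]$ of degree at most $k$, the feasible set is the half-line $[\lambda_k(p),\infty)$, and $\lambda_k(p)$ equals the largest generalized eigenvalue of the pencil $(\M_k(p\,\z),\M_k(\z))$; in particular the infimum is attained, which gives the first claim.

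Next I would prove monotonicity and the upper bound by $p^\star$. The matrix $\M_k(\cdot)$ is the principal submatrix of $\M_{k+1}(\cdot)$ indexed by $\N_k^N$, so a matrix that is semidefinite positive at order $k+1$ is semidefinite positive at order $k$; thus feasibility at order $k+1$ implies feasibility at order $k$, giving $\lambda_k(p)\le\lambda_{k+1}(p)$. Taking $\lambda=p^\star$ and using $p^\star-p\ge0$ on $\K$ yields $\int_{\K}(p^\star-p)\,q^2\,d\mu\ge0$ for every $q$, so $p^\star$ is feasible at every order and $\lambda_k(p)\le p^\star$ for all $k$. Therefore $(\lambda_k(p))$ is monotone non-decreasing and converges to some $\lambda_\infty\le p^\star$.

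The main obstacle is the reverse inequality $\lambda_\infty\ge p^\star$, which I would argue by contradiction. If $\lambda_\infty<p^\star$, fix $\lambda\in(\lambda_\infty,p^\star)$; since $\lambda>\lambda_k(p)$ for all $k$, $\lambda$ is feasible at every order, hence $\int_{\K}(\lambda-p)\,q^2\,d\mu\ge0$ for \emph{every} polynomial $q$. Let $d\nu:=(\lambda-p)\,d\mu$, a finite signed Borel measure on the compact set $\K$. By the Stone--Weierstrass theorem polynomials are dense in $C(\K)$; applying this to $\sqrt{f}$ for nonnegative $f\in C(\K)$ and using that $\nu$ has finite total variation gives $\int_{\K}f\,d\nu\ge0$ for all nonnegative $f\in C(\K)$, which by regularity of Borel measures on $\K$ forces $\nu\ge0$. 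But $V:=\{\y\in\K:p(\y)>\lambda\}$ is a nonempty relatively open subset of $\K$, so $\mu(V)>0$ because $\supp\mu=\K$, and then $\nu(V)=\int_V(\lambda-p)\,d\mu<0$ since the integrand is strictly negative on $V$---contradicting $\nu\ge0$. Therefore $\lambda_\infty=p^\star$, i.e.~$\lambda_k(p)\uparrow p^\star$. The delicate points, all standard in the moment-problem literature, are the passage from the moment inequalities to positivity of the signed measure $\nu$ and the use of $\supp\mu=\K$ to guarantee $\mu(V)>0$; the positive-definiteness of $\M_k(\z)$ noted above is what makes~\eqref{eq:supgeneigpop} a genuine generalized eigenvalue problem and gives attainment for free.
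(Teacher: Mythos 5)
Your proposal is correct. Note that the paper does not actually prove this statement: it is quoted verbatim from Lasserre's work, and the only in-paper argument of this kind is the proof of the robust analogue (Lemma~\ref{th:rob1}), which follows the same skeleton but invokes Property~\ref{th:loc} as a black box for the two nontrivial steps. Measured against that template, your treatment of attainment (positive definiteness of $\M_k(\z)$ from the full-dimensional support of $\mu$, hence the feasible set is a closed half-line and $\lambda_k(p)$ is the top generalized eigenvalue of the pencil), of monotonicity (principal-submatrix argument), and of the bound $\lambda_k(p)\le p^\star$ (feasibility of $p^\star$ since $p^\star-p\ge 0$ on $\K$) coincides with the paper's route. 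Where you genuinely diverge is the convergence step: instead of citing the characterization ``$f\ge 0$ on $\K$ iff $\M_k(f\,\z)\succeq 0$ for all $k$'', you re-prove its hard direction for the compact case — passing from $\int_{\K}(\lambda-p)q^2\,d\mu\ge 0$ for all polynomials $q$ to positivity of the signed measure $(\lambda-p)\,d\mu$ via Stone--Weierstrass applied to $\sqrt{f}$, and then using $\supp\mu=\K$ to find a relatively open set of positive $\mu$-measure where $\lambda-p<0$. This buys a self-contained argument that makes explicit why compactness of $\K$ and full support of $\mu$ are the operative hypotheses, at the cost of reproving a known result; the paper's (and Lasserre's) formulation via the localizing-matrix criterion is what generalizes cleanly to the robust setting of Lemma~\ref{th:rob1}, where the same property is applied uniformly over all $\e\in\E$.
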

The convergence rate have been studied later on  in~\cite{deKlerk16}, which states that $p^\star - \lambda_k(p) = O(\frac{1}{\sqrt{k}})$.
\begin{example}
\label{ex:geneig}
With $p(\y) := y_1^2 y_2 + 3 y_1 - \frac{2}{3} $, $\K = [0, 1]^2$,  we obtain  the following sequence of lower bounds: $\lambda_1 (p) = 0.82 \leq \lambda_2 (p) = 1.43 \leq \lambda_3 (p) = 1.83 \leq \dots \leq \lambda_{20}(p) = 2.72 \leq p^\star = \frac{10}{3}$.  The computation takes $16.2s$ on an Intel(R) Core(TM) i5-4590 CPU @ 3.30GHz.
Here, we notice that the convergence to the maximal value $p^\star$ is slow in practice, confirming what the theory suggests.
\end{example}
\if{
\new{
\paragraph{Verification of positive definitess}
to compute the solution of Problem~\eqref{eq:supgeneigpop}, we have to solve a generalized eigenvalue problem. For the sake of efficiency, this is practically done with numerical procedures such as~\texttt{eig} available in {\sc Matlab}. When such a procedure returns an (approximate) solution $\tilde{\lambda}$, one can add a specific constant (very possibly small) verify afterwards that the matrix $\lambda$, we rely on the following   ~\cite{Rump2006}
}
}\fi
\subsubsection{Hierarchies of bounds using elementary computations}
\label{sec:mvbeta}
By contrast with the above method, further work by~\cite{mvbeta16} provides a second method only requiring elementary computations. This method also yields a monotone non-decreasing sequence of lower bounds converging to the global maximum of a polynomial $p$ while considering  for each $k \in \N$:
\begin{equation}
\label{eq:supmvbetapop}
p_k^H :=  \min_{(\eta, \beta) \in \N_{2 k}^{2 N}} \sum_{|\alpha| \leq d} p_{\alpha} \frac{\gamma_{\eta+\alpha, \beta}}{\gamma_{\eta, \beta}}
\,,
\end{equation}
where, for each $(\eta, \beta) \in \N_{2 k}^{2 N}$ the scalar $\gamma_{\eta, \beta}$ is the corresponding moment of the measure whose density is the multivariate beta distribution:
\begin{equation}
\label{eq:mvbeta}
\gamma_{\eta, \beta} :=  \int_\K \y^{\eta}  \, (\bone - \y)^{\beta} d \y = \int_\K y_1^{\eta_1} \cdots  y_N^{\eta_N} \, (1 - y_1)^{\beta_1} \cdots (1 - y_N)^{\beta_N} d \y \,.
\end{equation}
As mentioned in~\cite[Section 2.3]{mvbeta16}, the computation of the number $p_k^H$ requires at most $O\bigl( \binom{2N + 2 k - 1}{2 k} \bigr)$ floating-point operations (flops).
\begin{theorem}{(~\cite[Lemma 2.4,Theorem 3.1]{mvbeta16})}
\label{th:supmvbetapop}
The sequence $(p_k^H)$ is monotone non-decreasing and $p_k^H \uparrow p^\star$ as $k \to \infty$.
\end{theorem}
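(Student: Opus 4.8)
The plan is to prove the two assertions in turn: monotonicity of $(p_k^H)$ and convergence $p_k^H \uparrow p^\star$. For the whole argument I would lean on the interpretation of each ratio $\gamma_{\eta+\alpha,\beta}/\gamma_{\eta,\beta}$ as an expectation. Indeed, fixing $(\eta,\beta)\in\N_{2k}^{2N}$, the function $\y\mapsto \y^{\eta}(\bone-\y)^{\beta}$ is a nonnegative (up to normalization, a density) weight on $\K$, and
\[
\frac{\gamma_{\eta+\alpha,\beta}}{\gamma_{\eta,\beta}} = \frac{\int_\K \y^{\alpha}\,\y^{\eta}(\bone-\y)^{\beta}\,d\y}{\int_\K \y^{\eta}(\bone-\y)^{\beta}\,d\y} = \mathbb{E}_{\mu_{\eta,\beta}}[\y^{\alpha}],
\]
where $\mu_{\eta,\beta}$ is the probability measure on $\K$ with density proportional to $\y^{\eta}(\bone-\y)^{\beta}$. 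Hence $\sum_{|\alpha|\le d} p_{\alpha}\,\gamma_{\eta+\alpha,\beta}/\gamma_{\eta,\beta} = \mathbb{E}_{\mu_{\eta,\beta}}[p(\y)] = \int_\K p\,d\mu_{\eta,\beta}$. Since $\mu_{\eta,\beta}$ is a probability measure supported on $\K$, each such expectation is $\le \max_{\y\in\K} p(\y) = p^\star$, so $p_k^H \le p^\star$ for every $k$; this gives the easy direction of the convergence claim.

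For monotonicity, I would show that the index set grows with $k$ in a way that only adds candidates: $\N_{2k}^{2N}\subseteq \N_{2(k+1)}^{2N}$, so the minimum over the larger set is no larger, i.e. $p_{k+1}^H \le p_k^H$\textemdash but wait, this gives the \emph{wrong} direction, so the correct statement must be that $p_k^H$ is defined via a minimum whose value \emph{increases} because increasing $k$ does not merely enlarge the feasible set; rather one must exhibit, for each $(\eta,\beta)\in\N_{2k}^{2N}$, a companion pair in $\N_{2(k+1)}^{2N}$ whose expectation dominates it. The key lemma here (which is exactly \cite[Lemma 2.4]{mvbeta16}) is that replacing $(\eta,\beta)$ by $(\eta+\e_i,\beta)$ or $(\eta,\beta+\e_i)$ in an \emph{optimally chosen} direction moves the measure $\mu_{\eta,\beta}$ toward the region where $p$ is large; more precisely, for the minimizing pair at level $k$ one can always find a one-step refinement at level $k+1$ with a weakly larger expectation, because the minimizer of $\mathbb{E}_{\mu_{\eta,\beta}}[p]$ over the level-$k$ pairs cannot be a local optimum of the refinement process unless it already equals $p^\star$. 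Concretely I would invoke the first-order comparison $\mathbb{E}_{\mu_{\eta+\e_i,\beta}}[p] - \mathbb{E}_{\mu_{\eta,\beta}}[p]$ expressed via a covariance term and argue that at least one coordinate move is non-negative.

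The remaining and genuinely substantial part is the convergence $p_k^H \to p^\star$. The strategy is: pick a maximizer $\y^\star\in\K$ of $p$, then choose sequences $\eta^{(k)},\beta^{(k)}$ with $|\eta^{(k)}|,|\beta^{(k)}|\le 2k$ such that the beta-type measures $\mu_{\eta^{(k)},\beta^{(k)}}$ concentrate around $\y^\star$ as $k\to\infty$; for a box $\K=\prod_i[\underline{y_i},\overline{y_i}]$ this is done coordinate-wise using the fact that a Beta$(a_i,b_i)$ law on $[\underline{y_i},\overline{y_i}]$ has mean tending to any prescribed interior point and variance $O(1/(a_i+b_i))$ as $a_i+b_i\to\infty$. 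Then $\int_\K p\,d\mu_{\eta^{(k)},\beta^{(k)}}\to p(\y^\star)=p^\star$ by continuity of $p$ and the dominated convergence theorem (or simply a modulus-of-continuity estimate, since $p$ is a polynomial and $\K$ compact). This forces $\liminf_k p_k^H \ge p^\star$, and combined with $p_k^H\le p^\star$ and the monotonicity already established, yields $p_k^H\uparrow p^\star$. I expect the main obstacle to be the monotonicity step: getting the direction of the inequality right requires the careful one-step refinement argument from \cite{mvbeta16} rather than a naive ``larger feasible set'' observation, and one has to handle boundary maximizers $\y^\star$ (where some $\eta_i$ or $\beta_i$ must be driven to $0$) with care in the convergence step.
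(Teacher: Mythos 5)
Your probabilistic reading of the ratios is correct, and the concentration argument you sketch for convergence is essentially the right one (it is also how \cite{mvbeta16} proceeds); note that the paper itself offers no proof of this statement and simply imports it from \cite{mvbeta16}, so the comparison can only be against what that result actually says. The genuine gap is in your treatment of monotonicity, and it propagates into your final step. You correctly observe that, with the $\min$ as printed in~\eqref{eq:supmvbetapop}, the nesting $\N_{2k}^{2N}\subseteq\N_{2(k+1)}^{2N}$ forces $p_{k+1}^H\le p_k^H$; but the patch you then propose cannot work. Exhibiting, for the level-$k$ minimizer, \emph{some} refined pair at level $k+1$ with a larger expectation says nothing about the minimum at level $k+1$, which is still taken over a set containing all level-$k$ pairs and hence can only decrease; no covariance or one-step-refinement lemma can reverse a minimum over a growing feasible set. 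The same defect hits your last step: from measures $\mu_{\eta^{(k)},\beta^{(k)}}$ concentrating at a maximizer you conclude $\liminf_k p_k^H\ge p^\star$, but a single well-chosen pair lower-bounds a \emph{maximum} over pairs, not a minimum.

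The resolution is that the extremum in~\eqref{eq:supmvbetapop} should be $\max_{(\eta,\beta)\in\N_{2k}^{2N}}$ rather than $\min$: the hierarchy of \cite{mvbeta16} produces \emph{upper} bounds $\min_{(\eta,\beta)}\int_\K f\,d\mu_{\eta,\beta}$ for the \emph{minimum} of $f$, and translating to the maximization problem via $p^\star=\max_{\K} p=-\min_{\K}(-p)$ turns that $\min$ over $(\eta,\beta)$ into a $\max$. (The strictly increasing numerical values in Example~\ref{ex:mvbeta} confirm this is what is actually computed.) With the $\max$ in place every piece of your plan works and becomes easy: each $\int_\K p\,d\mu_{\eta,\beta}\le p^\star$ gives $p_k^H\le p^\star$; monotonicity is exactly the naive nested-index-set observation you first wrote down, now with the correct sign; and your concentration of Beta-type measures at a maximizer $\y^\star$ (handling boundary coordinates by driving the appropriate one of $\eta_i,\beta_i$ to infinity while keeping the other at $0$) yields $p_k^H\ge\int_\K p\,d\mu_{\eta^{(k)},\beta^{(k)}}\to p^\star$. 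In short, the missing idea is not a cleverer refinement lemma but the recognition that the definition must involve a maximum over $(\eta,\beta)$.
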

As for the sequence $(\lambda_k(p))$, the convergence rate is also in $O(\frac{1}{\sqrt{k}})$ (see~\cite[Theorem~4.9]{mvbeta16}).
\begin{example}
\label{ex:mvbeta}
With $p(\y) := y_1^2 y_2 + 3 y_1 - \frac{2}{3} $, $\K = [0, 1]^2$,  we obtain the  following sequence of 20 lower bounds: $p_1^H = 0.52 \leq p_2^H = 0.95 \leq p_3^H = 1.25 \leq \dots \leq p_{20}^H = 2.42 \leq p^\star = \frac{10}{3}$.  The computation takes $17.1s$ on \new{the same machine as Example~\ref{ex:geneig}.}
For small order values \new{($k \leq 10$)}, this method happens to be more efficient than the one  previously used in~Example~\ref{ex:geneig} but yields coarser bounds.
At higher order (\new{$k \geq 10$}), both methods happen to yield similar accuracy and performance with a slow rate of convergence. 
Note that performance could be improved in both cases by vectorizing our implementation code.
\end{example}
\section{A SPARSE SDP HIERARCHY FOR LOWER BOUNDS OF ROUNDOFF ERRORS}
\label{sec:robustsdp}
\if{
$f$ over a simple  semialgbraic set (e.g.~box) $\mathbf{K} = \mathbf{X} \times \mathbf{E}$, when $f$ has linear dependency on the variables in $\mathbf{E}$. By contrast with the converging sequence of bounds in~[J.B. Lasserre, A new look at nonnegativity on closed sets and polynomial optimization, SIAM J. Optim. 21, pp. 864--885, 2010], we prove that nonnegativity of $f$ over $\mathbf{K}$ is equivalent to semidefinite positiveness of countably many uncertain moment matrices, with perturbations in $\mathbf{E}$.
A method relying on robust SDP relaxations to provide lower bounds over the absolute roundoff error of programs implementing polynomial functions. We compare the accuracy and efficiency of these robust relaxations 
}\fi
This section is dedicated to our main theoretical contribution, that is a new SDP hierarchy of converging lower bounds on the absolute roundoff error of polynomial programs. \new{This hierarchy exploits the sparsity pattern occurring in the definition of $l$.}

The two existing SDP hierarchies presented in Section~\ref{sec:lowerpop} can be directly applied to solve Problem~\eqref{eq:loptim}, that is the computation of lower bounds on $l^\star := \max_{(\x, \e) \in \K} | l(\x, \e) |$. 
In our case, $N = n + m$ is the sum of  the number of input and error variables, $p = l$ and $\y = (\x, \e) \in \K = \X \times \E$.
At order $k$, a first relaxation procedure, denoted by \texttt{geneig}, returns the number $\lambda_k(l)$ by solving Problem~\eqref{eq:supgeneigpop}. A second relaxation procedure, denoted by \texttt{mvbeta}, returns the number $l_k^H$ by solving Problem~\eqref{eq:supmvbetapop}. In other words, this already gives two implementations \texttt{geneig} and \texttt{mvbeta} for the relaxation procedure $\lowerbound$ in the algorithm \texttt{fpsdp} presented in Figure~\ref{alg:fpsdp}.

However, these two procedures can be computationally  demanding to get precise bounds for programs with larger number of variables, i.e.~for either higher values of $k$ \new{($\simeq 10$)} or $N= n + m$ \new{($\simeq 50$)}. Experimental comparisons performed in Section~\ref{sec:benchs} will support this claim.
The design of a third implementation is motivated by the fact that both \texttt{geneig} and \texttt{mvbeta} do not take directly into account the special \new{sparse} structure of the polynomial $l$, that is the linearity w.r.t.~$\e$.

We first note that $l(\x, \e) = \sum_{j=1}^m e_j s_j(\x)$, for polynomials $s_1, \dots, s_m \in \R[\x]$. The maximization problem $\overline{l} := \max_{(\x, \e) \in \K} l(\x, \e)$ can then be written as follows:
\begin{equation*}
\begin{aligned}
\overline{l}  := \min_{\lambda} \quad & \lambda \\
\text{s.t.} 
\quad & \lambda \ge  \sum_{j=1}^m e_j \, s_j(\x) \,, \quad \forall \x \in \X \,, \forall \e \in \E \,, \\
\quad & \lambda \in \R \,.
\end{aligned}
\end{equation*}
From now on, we denote by $(\z^{\X})$ the moment sequence associated with the uniform measure on $\X$. 
We first recall the following useful property of the localizing matrices associated to $\z^{\X}$:
\begin{property}
\label{th:loc}
Let $f \in \R[\x]$ be a polynomial. Then f is nonnegative over $\X$ if and only if $\M_k (f \, \new{\z^{\X}}) \succeq 0$, for all $k \in \N$.
\end{property}
\begin{proof}
This is a special case of~\cite[Theorem 3.2 (a)]{Lasserre11} applied to the uniform measure supported on $\X$ with moment sequence \new{$\z^{\X}$}.
\end{proof}
In particular for $f = 1$, Property~\ref{th:loc} states that the moment matrix $\M_k (\new{\z^{\X}})$ is semidefinite positive, for all $k \in \N$.
Let us now consider the following hierarchy of optimization programs, indexed by $k \in \N$:
\begin{equation}
\label{eq:robsupgeneigpop}
\begin{aligned}
\lambda_k' (l) := \min_{\lambda} \quad & \lambda \\
\text{s.t.} 
\quad & \lambda \, \M_k(\z^{\X}) \succeq \sum_{j=1}^{m} e_j \,  \M_k(s_j \, \z^{\X}) \,, \quad \forall \e \in \E \,,\\
\quad & \lambda \in \R \,.
\end{aligned}
\end{equation}
Problem~\eqref{eq:robsupgeneigpop} is called \new{{\em a robust SDP} program} as it consists of minimizing the (worst-case) cost while satisfying SDP constraints for each possible value of the parameters $\e$ within the box $\E$.
\begin{lemma}
\label{th:rob1}
For each $k \in \N$, Problem~\eqref{eq:robsupgeneigpop} admits a finite optimal solution $\lambda_k'(l)$. Furthermore, the sequence $(\lambda_k'(l))$ is monotone non-decreasing and $\lambda_k'(l) \uparrow \overline{l}$ as $k \to \infty$.
\end{lemma}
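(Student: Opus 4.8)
The strategy is to reduce Problem~\eqref{eq:robsupgeneigpop} to the non-robust generalized eigenvalue hierarchy~\eqref{eq:supgeneigpop} applied to the polynomial $l$ on the box $\K = \X \times \E$, with a well-chosen measure $\mu$ supported on $\K$, and then invoke Theorem~\ref{th:supgeneigpop}. The natural candidate is the product measure $\mu = \mu_{\X} \otimes \mu_{\E}$, where $\mu_{\X}$ is the uniform measure on $\X$ (with moment sequence $\z^{\X}$) and $\mu_{\E}$ is chosen so that its odd moments in each $e_j$ vanish and its relevant even moments are nonzero; taking $\mu_{\E}$ uniform on $\E = [-\varepsilon,\varepsilon]^m$ works, since then $\int_{\E} e_j\, d\mu_{\E} = 0$ for each $j$. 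Let $\z$ denote the moment sequence of $\mu$ on $\K$.

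\textbf{Key steps.} First, I would show that the robust SDP constraint in~\eqref{eq:robsupgeneigpop} is equivalent to the single SDP constraint appearing in~\eqref{eq:supgeneigpop} for $p = l$. The point is the bilinear structure $l(\x,\e) = \sum_{j=1}^m e_j\, s_j(\x)$ together with the product form of $\mu$. On the one hand, because the monomials $\e^{\beta}$ in the $\e$-variables have degree at most $1$ in $l$, the moment matrix $\M_k(\z)$ of the product measure decomposes blockwise according to the $\e$-monomial indices, and the localizing matrix $\M_k(l\,\z)$ couples the ``$e_j$-block'' with the ``$1$-block'' precisely through $\M_k(s_j\,\z^{\X})$ — this uses $\int e_j^2\, d\mu_\E > 0$ and that the cross-moments $\int e_i e_j\, d\mu_\E$, $\int e_j\, d\mu_\E$ behave as expected. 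On the other hand, the robust constraint $\lambda\,\M_k(\z^{\X}) \succeq \sum_j e_j\, \M_k(s_j\,\z^{\X})$ for all $\e \in \E$ is, after substituting $e_j = \pm\varepsilon$ and using convexity of the PSD cone, a finite intersection of LMIs; a short computation shows it holds iff $\M_k((\lambda - l)\z) \succeq 0$, i.e.~iff $\lambda\,\M_k(\z) \succeq \M_k(l\,\z)$. Hence $\lambda_k'(l) = \lambda_k(l)$, the optimal value of~\eqref{eq:supgeneigpop} for this $\mu$.

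\textbf{Conclusion.} Given this identification, finiteness and attainment of the optimum follow from Theorem~\ref{th:supgeneigpop} (the feasible set is nonempty — any $\lambda \geq l^{\max}$ on the compact $\K$ works — and closed, and the objective is bounded below by continuity of eigenvalues), monotonicity in $k$ is immediate since the constraint at order $k+1$ contains the order-$k$ constraint as a principal submatrix block (so feasibility is preserved as $k$ increases, giving $\lambda_{k+1}'(l) \geq \lambda_k'(l)$), and the convergence $\lambda_k'(l) \uparrow \overline{l}$ is exactly the statement $\lambda_k(l) \uparrow \max_{\K} l = \overline{l}$ from Theorem~\ref{th:supgeneigpop}, which applies because $\mu$ has support equal to $\K$ (a product of full-dimensional boxes). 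I expect the main obstacle to be the bookkeeping in the first step: carefully indexing the rows and columns of $\M_k(\z)$ and $\M_k(l\,\z)$ by pairs $(\alpha,\beta) \in \N^n \times \N^m$, verifying the claimed block structure when $\mu$ is a product measure with vanishing odd $\e$-moments, and checking that ``robust feasibility for all $\e \in \E$'' collapses exactly to the PSD-ness of the assembled moment matrix of $(\lambda - l)$ — rather than to a strictly weaker or stronger condition. The remaining parts are then routine appeals to Theorem~\ref{th:supgeneigpop}.
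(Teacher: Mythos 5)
There is a genuine gap at the heart of your argument: the claimed equivalence between the robust constraint of Problem~\eqref{eq:robsupgeneigpop} and the single LMI $\lambda\,\M_k(\z) \succeq \M_k(l\,\z)$ of Problem~\eqref{eq:supgeneigpop} on $\K = \X\times\E$ with the product measure is false. Only one implication holds. If $\lambda\,\M_k(\z^{\X}) \succeq \sum_j e_j\,\M_k(s_j\,\z^{\X})$ for all $\e\in\E$, then for every $Q\in\R[\x,\e]$ of degree at most $k$ one has $\int_{\X}(\lambda - l(\x,\e))\,Q(\x,\e)^2\,d\mu_{\X}\geq 0$ for each fixed $\e$, and integrating over $\e$ gives $\M_k((\lambda-l)\z)\succeq 0$; so the robust feasible set is \emph{contained in} the non-robust one and $\lambda_k'(l)\geq\lambda_k(l)$. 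The converse fails: the non-robust constraint only tests $\lambda - l$ against squares averaged over $\e$ and cannot localize at a fixed $\e_0$. A minimal counterexample: take $n=0$, $m=1$, $l(\e)=e_1$, $\E=[-1,1]$, $k=1$. The robust constraint reads $\lambda\geq e_1$ for all $e_1\in[-1,1]$, i.e.~$\lambda_1'(l)=1=\overline{l}$, whereas the moment LMI on $[-1,1]$ with the uniform measure gives $\lambda_1(l)=1/\sqrt{3}<1$. The same strict gap is visible in Table~\ref{table:error}, where $\robustsdp$ returns strictly larger bounds than $\geneig$ at equal order; indeed, producing tighter bounds with smaller matrices is the entire point of the robust hierarchy, so an order-by-order equivalence would defeat its purpose. (Your reduction of the robust constraint to the $2^m$ vertices of $\E$ by affineness and convexity of the PSD cone is correct, but that finite intersection of LMIs is not the single moment LMI on $\K$.)

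Your observation could be salvaged as a sandwich $\lambda_k(l)\leq\lambda_k'(l)\leq\overline{l}$, which would give convergence from Theorem~\ref{th:supgeneigpop}; but the upper bound $\lambda_k'(l)\leq\overline{l}$ still requires showing that $\overline{l}$ is \emph{robustly} feasible, and that is precisely where the paper's actual mechanism enters: apply Property~\ref{th:loc} to the polynomial $\overline{l}-l_{\e}\in\R[\x]$ for each fixed $\e\in\E$ to get $\M_k((\overline{l}-l_{\e})\z^{\X})\succeq 0$, then use linearity of the localizing matrix. The paper also needs separate arguments for finiteness (evaluate the constraint at $\e=0$ to get $\lambda\geq 0$), for monotonicity ($\M_k$ is a principal submatrix of $\M_{k+1}$ — this part of your proposal is fine), and for convergence (apply the converse direction of Property~\ref{th:loc} to the limit $\lambda'(l)-l_{\e}$ for each fixed $\e$). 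None of these is delivered by the false equivalence, so the proof as written does not go through.
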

\begin{proof}
The proof is inspired from the one of~\cite[Theorem 4.1]{Lasserre11} since Problem~\eqref{eq:robsupgeneigpop} is a robust variant of Problem~\eqref{eq:supgeneigpop}. 

First, let us define for all $\e \in \E$ the polynomial $l_{\e} (\x) := l(\x, \e)$ in $\R[\x]$. The polynomial $\overline{l} - l$ is nonnegative over $\X \times \E$, thus for all $\e \in \E$, the polynomial $\overline{l} - l_{\e}$ is nonnegative over $\X$.
By using Property~\ref{th:loc}, all localizing matrices of $\overline{l} - l_{\e}$ are semidefinite positive. 
This yields $\M_k( (\overline{l} - l_{\e})  \z^{\X}) \succeq 0$, for all $\e \in \E$. By linearity of the localizing matrix, we get $\overline{l} \,  \M_k(\z^{\X}) \succeq \sum_{j=1}^{m} e_j \,  \M_k(s_j \, \z^{\X}) $, for all $\e \in \E$. For all $k \in \N$, this proves that $\overline{l}$ is feasible for Problem~\eqref{eq:robsupgeneigpop} and  $\lambda_k'(l) \leq \overline{l}$. Next, let us fix $k \in \N$ and an arbitrary feasible solution $\lambda$ for Problem~\eqref{eq:robsupgeneigpop}. Since for all $\e \in \E$, one has $\M_k( (\lambda - l_{\e})  \z^{\X}) \succeq 0$, this is in particular the case for $\e = 0$, which yields $\lambda \M_k(\z^{\X}) \succeq 0$. Since the moment matrix $\M_k(\z^{\X})$ is semidefinite positive, one has $\lambda \geq 0$. Thus the feasible set of Problem~\eqref{eq:robsupgeneigpop} is nonempty and bounded, which proves the existence of a finite optimal solution $\lambda_k'(l)$.

Next, let us fix $k \in \N$. For all $\e \in \E$, $\M_k( (\lambda - l_{\e})  \z^{\X})$ is a sub-matrix of $\M_{k+1}( (\lambda - l_{\e})  \z^{\X})$, thus $\M_{k+1}( (\lambda - l_{\e})  \z^{\X})  \succeq 0$ implies that  $\M_{k}( (\lambda - l_{\e})  \z^{\X})  \succeq 0$, yielding $\lambda_k'(l) \leq \lambda_{k+1}'(l)$. 
Hence, the sequence $(\lambda_k'(l))$ is monotone non-decreasing. 
Since for all $k \in \N$, $\lambda_k'(l) \leq \overline{l}$, one has $(\lambda_k'(l))$ converges to $\lambda'(l) \leq \overline{l}$ as $k \to \infty$. 
For all $\e \in \E$, for all $k \in \N$, one has $\M_k( (\lambda'(l) - l_{\e})  \z^{\X})  \succeq \M_k( (\lambda_k'(l) - l_{\e})  \z^{\X})  \succeq 0$. 
By using again Property~\ref{th:loc}, this shows that for all $\e \in \E$, the polynomial $\lambda'(l) - l_{\e}$ is nonnegative over $\X$, yielding $\lambda'(l) \geq \overline{l}$ and the desired result $\lambda'(l) = \overline{l}$.
\end{proof}
\if{
Solving the above generalized eigenvalue problems can be done with efficient solvers but is still computationally demanding for large polynomial degree and number of variables. Here, we propose a robust version for the relaxations seen in Section~\eqref{sec:geneig}.\\
}\fi
Next, we use the framework developed in~\cite{RobustElGhaoui} to prove that for all $k\in \N$, Problem~\eqref{eq:robsupgeneigpop} is equivalent to the following SDP involving the additional real variable $\tau$: 
\begin{equation}
\label{eq:suprobustsdp}
\begin{aligned}
\lambda_k'' (l) := \min_{\lambda, \tau} \quad & \lambda \\
\text{s.t.} 
\quad &  \begin{pmatrix}
\lambda \, \M_k(\z^{\X}) - \tau \, \L_k \, \L_k^{\T} & \Rb_k^{\T} \\ 
 \Rb_k  & \tau \I
\end{pmatrix} \succeq 0 \,, \\
\quad &  \lambda, \tau \in \R \,.
\end{aligned}
\end{equation}
Both matrices $\L_k = [\L_k^1 \cdots \L_k^m]$ and 
$\Rb_k = [\Rb_k^1 \dots \Rb_k^m]^\T$ 
are obtained by performing a full rank factorization of the localizing matrix 
$\M_k(s_j \, \z^{\X})$ for each $j = 1, \dots, m$. This can be done e.g.~with the PLDL$^\T$P$^\T$ decomposition~\cite[Section 4.2.9]{Golub96} and is equivalent to \new{finding} two matrices $\L_k^j$ and $\Rb_k^j$ such that $\M_k (s_j \, \z^{\X}) = 2 \, \L_k^j \, \Rb_k^j$. For the sake of clarity we use the notations $\L_k$ and $\Rb_k$ while omitting the dependency of both matrices w.r.t.~$\z^{\X}$.

For each $j = 1, \dots, m$, the matrix $\L_k^j$ (resp.~${\Rb_k^j}$) has the same number of lines (resp.~columns) as $\M_k(s_j \, \z^{\X})$, i.e.~$\binom{n+k}{k}$, and the same number of columns (resp.~lines) as the rank $r_j$ of $\M_k (s_j \, \z^{\X} )$. 
The size of the \new{identity matrix} $\I$ is $m \binom{n+k}{k}$.
%
\begin{theorem}
\label{th:rob2}
For each $k \in \N$, Problem~\eqref{eq:suprobustsdp} admits a finite optimal solution $\lambda_k''(l) = \lambda_k'(l)$\new{, where $\lambda_k'(l)$ is the solution of Problem~\eqref{eq:robsupgeneigpop}.
}
Furthermore, the sequence $\lambda_k''(l)$ is monotone non-decreasing and $\lambda_k''(l) \uparrow \overline{l}$ as $k \to \infty$.
\end{theorem}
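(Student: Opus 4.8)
The plan is to deduce Theorem~\ref{th:rob2} from Lemma~\ref{th:rob1} by showing that Problem~\eqref{eq:suprobustsdp} is an exact reformulation of the robust SDP~\eqref{eq:robsupgeneigpop}. Once the equality $\lambda_k''(l) = \lambda_k'(l)$ is established for every $k$, the last sentence (existence of a finite optimum, monotonicity, and convergence $\lambda_k''(l)\uparrow\overline{l}$) follows verbatim from Lemma~\ref{th:rob1}, so the whole content of the theorem is the equivalence of the two optimization problems.

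\textbf{Main step: the $\mathcal{S}$-procedure / robust counterpart.} Fix $k\in\N$. Using the full-rank factorizations $\M_k(s_j\,\z^{\X}) = 2\,\L_k^j\,\Rb_k^j$ and stacking them into $\L_k = [\L_k^1 \cdots \L_k^m]$ and $\Rb_k = [\Rb_k^1 \cdots \Rb_k^m]^\T$, one writes $\sum_{j=1}^m e_j\,\M_k(s_j\,\z^{\X}) = \L_k\,\Delta(\e)\,\Rb_k + \Rb_k^\T\,\Delta(\e)\,\L_k^\T$ where $\Delta(\e)$ is the block-diagonal matrix carrying the perturbations $e_j$ (each block $e_j\I$), so that $\Delta(\e)^\T\Delta(\e)\preceq\varepsilon^2\I$ as $\e$ ranges over $\E=[-\varepsilon,\varepsilon]^m$ (possibly after an innocuous rescaling absorbing $\varepsilon$ into $\L_k$ or $\Rb_k$ — I would normalize so the uncertainty set is the unit ball, matching~\cite{RobustElGhaoui}). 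The constraint of~\eqref{eq:robsupgeneigpop}, namely $\lambda\,\M_k(\z^{\X}) - \L_k\Delta(\e)\Rb_k - \Rb_k^\T\Delta(\e)^\T\L_k^\T \succeq 0$ for all admissible $\Delta(\e)$, is exactly of the form treated by the robust-SDP lemma of El~Ghaoui--Lebret: it holds for all such $\Delta$ if and only if there exists $\tau\ge 0$ with
\[
\begin{pmatrix}
\lambda\,\M_k(\z^{\X}) - \tau\,\L_k\L_k^\T & \Rb_k^\T \\
\Rb_k & \tau\I
\end{pmatrix}\succeq 0 \,,
\]
which is precisely the constraint of~\eqref{eq:suprobustsdp}. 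Invoking this lemma (after checking its hypotheses — essentially that the perturbation enters linearly and the uncertainty set is a norm ball, which is the case here) gives that $\lambda$ is feasible for~\eqref{eq:robsupgeneigpop} iff there is a $\tau$ making $(\lambda,\tau)$ feasible for~\eqref{eq:suprobustsdp}; hence the two problems have the same value, $\lambda_k''(l)=\lambda_k'(l)$, and the latter is attained and finite by Lemma~\ref{th:rob1}.

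\textbf{Wrapping up.} Having $\lambda_k''(l)=\lambda_k'(l)$ for all $k$, I would simply quote Lemma~\ref{th:rob1}: each $\lambda_k'(l)$ is a finite optimal solution, the sequence $(\lambda_k'(l))$ is monotone non-decreasing, and $\lambda_k'(l)\uparrow\overline{l}$ as $k\to\infty$; therefore the same holds for $(\lambda_k''(l))$. One small bookkeeping point worth a sentence: I should note that feasibility of~\eqref{eq:suprobustsdp} forces $\tau\ge 0$ (read off the bottom-right block $\tau\I\succeq 0$) and that an optimal $\tau$ exists because, for fixed optimal $\lambda=\lambda_k'(l)$, the set of admissible $\tau$ is closed and nonempty and the objective does not depend on $\tau$.

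\textbf{Expected main obstacle.} The delicate part is making the robust-counterpart reduction fully rigorous: verifying that the hypotheses of the El~Ghaoui--Lebret lemma genuinely apply (in particular that the ``lossless $\mathcal{S}$-procedure'' direction is valid for the block-structured perturbation $\Delta(\e)$ — it is, because each block is a scalar multiple of identity, so the uncertainty set is exactly a ball in the relevant operator norm and no structured-perturbation gap arises), and handling the $\varepsilon$-scaling cleanly so that the $\L_k,\Rb_k$ appearing in~\eqref{eq:suprobustsdp} are the stated factorizations of $\M_k(s_j\,\z^{\X})$ rather than rescaled versions. Everything else is a direct transcription of Lemma~\ref{th:rob1}.
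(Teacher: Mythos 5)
Your proposal follows essentially the same route as the paper: it reduces the robust constraint of Problem~\eqref{eq:robsupgeneigpop}, via the full-rank factorizations $\M_k(s_j\,\z^{\X}) = 2\,\L_k^j\,\Rb_k^j$, to the linear-fractional form treated by El~Ghaoui--Lebret (with $D=0$), invokes their exactness theorem to obtain the LMI with the auxiliary variable $\tau$ in Problem~\eqref{eq:suprobustsdp}, and then transfers finiteness, monotonicity and convergence from Lemma~\ref{th:rob1}. The paper's proof is precisely this reduction (casting the problem as Problem~(4) and constraint~(8) of~\cite{RobustElGhaoui} and citing their Theorem~3.1); if anything, you are more explicit than the paper about the $\varepsilon$-normalization and about why the structured perturbation $\Delta = \diag(0,\e)$ is claimed not to introduce a conservatism gap.
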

\begin{proof}
It is enough to prove the equivalence between Problem~\eqref{eq:suprobustsdp} and Problem~\eqref{eq:robsupgeneigpop} since then the result follows directly from Lemma~\ref{th:rob1}.
Let us note $\bzero := (0, \dots, 0) \in \R^n$.
Problem~\eqref{eq:robsupgeneigpop} can be cast as Problem~(4) in~\cite{RobustElGhaoui} with $x = (\lambda, \bzero)$, $F(x) =  \lambda \, \M_k(\z^{\X})$, $\Delta = \diag(0,\e)$, $c = (1,\bzero)$, $\mathcal{D} = \R^{(m+1) \times (m+1)}$, $\rho = 1$. 

In addition, the robust SDP constraint of Problem\eqref{eq:robsupgeneigpop} can be rewritten as \new{the linear matrix inequality~(8) in Section~3.1 of~\cite{RobustElGhaoui}}, i.e.~$F + L \, \Delta \, (I - D \, \Delta)^{-1} \, R \, + \, R^\T \, \Delta \, (I - D \, \Delta)^{-\T} \, L^\T \succeq 0$, with $L = \L_k$, $R = \Rb_k$ and $D = 0$.
Then, the desired equivalence result follows from~\cite[Theorem~3.1]{RobustElGhaoui}.
\end{proof}
This procedure provides a third choice, called $\robustsdp$, for the relaxation procedure $\lowerbound$ in the algorithm \texttt{fpsdp} presented in Figure~\ref{alg:fpsdp}.
\paragraph{Computational considerations} 
As for the \texttt{geneig} procedure, one also obtains the convergence rate $\overline{l} - \lambda_k''(l) = O(\frac{1}{\sqrt{k}})$. However, the resolution cost of Problem~\eqref{eq:suprobustsdp} can be smaller.
Indeed, from~\cite[Section 4.2.9]{Golub96}, the cost of each full rank factorization is cubic in each localizing matrix size, yielding a total factorization cost of $O \bigl(m \, \binom{n+k}{k}^3 \bigr)$ flops.
From~\cite[Section 11.3]{IPM94} the  SDP solving cost is proportional to the cube of the matrix size, yielding $O \bigl(m^3 \, \binom{n+k}{k}^3 \bigr)$ flops for Problem~\eqref{eq:suprobustsdp}. 
Hence, the overall cost of the $\robustsdp$ procedure is bounded by $O \bigl(m^3 \, \binom{n+k}{k}^3 \bigr)$ flops. This is in contrast with the cost of $O \bigl( \binom{n+m + k}{k}^3  \bigr) $ flops for \texttt{geneig} as well as the cost of \new{$O \bigl( \binom{2 n + 2 m + 2 k - 1}{2 k} \bigr)$} flops for \texttt{mvbeta}. In the sequel, we compare  these expected costs for several values of $n$, $m$ and $k$.

\section{EXPERIMENTAL RESULTS AND DISCUSSION}
\label{sec:benchs}
Now, we present experimental results obtained by applying our algorithm~\texttt{fpsdp} (see Figure~\ref{alg:fpsdp}) with the three relaxation procedures $\geneig$, $\mvbeta$ and $\robustsdp$ to various examples coming from physics, biology, space control, and optimization. 
The procedures $\geneig$, $\mvbeta$ and $\robustsdp$ provide lower bounds of a polynomial maximum by solving Problem~\eqref{eq:supgeneigpop}, Problem~\eqref{eq:supmvbetapop} and Problem~\eqref{eq:suprobustsdp}, respectively.
The \texttt{fpsdp} algorithm is implemented as a software package written in $\matlab$, called $\fpsdp$. Setup and usage of $\fpsdp$ are described on the dedicated web-page\footnote{\url{https://github.com/magronv/FPSDP}} with specific instructions\footnote{see the \texttt{README.md} file in the top level directory}. 
The three procedures are implemented using $\yalmip$~\cite{yalmip}  which is a toolbox for advanced modeling and solution of \new{(non-)}convex optimization problems in $\matlab$. 

The only procedure which requires an SDP solver is $\robustsdp$. Indeed, $\geneig$ requires to solve a particular SDP instance which can be handled with a generalized eigenvalue solver.
In practice, $\geneig$ relies on the function $\texttt{eig}$ from $\matlab$ to solve generalized eigenvalue problems. 
Full rank factorization within the $\robustsdp$ procedure is performed with the function $\texttt{rref}$ from $\matlab$. 

For solving SDP problems, we rely on $\mosek \ 7.0$~\cite{mosek}. For more details about the installation, usage and setup instructions of \new{
$\yalmip$ and $\mosek$, we refer to the respective dedicated web-pages  \footnote{\url{https://docs.mosek.com/7.0/toolbox/index.html}}  \footnote{\url{http://users.isy.liu.se/johanl/yalmip/pmwiki.php?n=Main.WhatIsYALMIP}}.
} 

\if{ OLD
%
}\fi
\new{
Note that the solution of Problem~\eqref{eq:suprobustsdp} is computed with a numerical solver implemented with finite-precision.
Hence, we have to check the bounds obtained with $\mosek$ by verifying that the matrix involed in~\eqref{eq:suprobustsdp} has nonnegative eigenvalues. We compute the eigenvalues of the matrices provided by $\mosek$ with $\texttt{eig}$. If the minimal eigenvalue returned by $\texttt{eig}$ is negative, then we add it to the lower bound and round the result towards $-\infty$. We emphasize that this procedure has an insignificant impact on the value of lower bounds computed for all benchmarks.
To perform rigorous checking of the positive definitess of the matrices, one could rely e.g.~on  verified floating-point Cholesky's decomposition, as proposed in~\cite{Rump2006}, but this is beyond the scope of this paper.
}
\subsection{Benchmark Presentation}
All examples are displayed in \new{the appendix. Our} results have been obtained on an Intel(R) Core(TM) i5-4590 CPU @ 3.30GHz. For the sake of further presentation, we associate an alphabet character (from \texttt{a} to \texttt{i}) to identify each of the 9 polynomial nonlinear programs which implement polynomial functions: \texttt{a} and \texttt{b} come from physics, \texttt{c}, \texttt{d} and \texttt{e} are derived from expressions involved in the proof of Kepler Conjecture~\cite{Flyspeck06} and \texttt{f}, \texttt{g} and \texttt{h} implement polynomial approximations of the sine and square root functions. All programs are used for similar upper bound comparison in~\cite[Section 4.1]{toms16}. Each program implements a polynomial $f$ with  $n$ input variables $\x \in \X$ and yields after rounding $m$ error variables $\e \in \E = [-\varepsilon, \varepsilon]^m$.
\begin{example}
The program \texttt{c} (see~\new{the appendix}) implements the polynomial expression 
\begin{align*}
f(\x) := x_2 \times x_5 + x_3 \times x_6 - x_2 \times x_3  - x_5 \times x_6 \\
+ x_1 \times ( - x_1 +  x_2 +  x_3  - x_4 +  x_5 +  x_6) \,,
\end{align*} 
and the program input is the six-variable vector $\x =  (x_1, x_2, x_3, x_4, x_5, x_6)$. The set $\X$ of possible input values is a product of closed intervals: $\X = [4.00, 6.36]^6$. 
The polynomial $f$ is obtained by performing 15 basic operations (1 negation, 3 subtractions, 6 additions and 5 multiplications).
When executing this program with a set $\hat{\x}$ of floating-point numbers defined by $\hat{\x} =  (\hat{x}_1, \hat{x}_2, \hat{x}_3, \hat{x}_4, \hat{x}_5, \hat{x}_6) \in \X$, one obtains the floating-point result $\hat{f}$. The error variables are $e_1, \dots, e_{21} \in [-\varepsilon, \varepsilon]$ and $\E = [-\varepsilon, \varepsilon]^{21}$.
\end{example}
For the sake of conciseness, we only considered to compare the performance of $\fpsdp$ on programs implemented in double ($\varepsilon = 2^{-53}$) precision floating point. \new{
We use two pre-processing features embedded in the $\realtofloat$ software package: first, we rely on the parser of $\realtofloat$ to get the expression of the linear part $l$ of the roundoff error $| f(\x)-\hat{f}(\x,\e) | = | l(\x,\e) + h(\x,\e) |$. Second, we compute an interval enclosure $h^\star$ of $h$ with the sub-rountine $\iabound$ available in $\realtofloat$ (as recalled in Section~\ref{sec:fp_pb}).}
\if{
OLD
To compute lower bounds of the roundoff error $| f(\x)-\hat{f}(\x,\e) | = | l(\x,\e) + h(\x,\e) |$, 
we use~\texttt{Real2Float} to obtain the expression $l$ and to bound $h$. We refer to~\cite[Section 3.1]{toms16} for more details.
}\fi

\begin{table}[!t]
\begin{center}
\tbl{Comparison results of lower bounds and execution times (in seconds) for \new{largest} absolute roundoff errors among  $\geneig$, $\mvbeta$, $\robustsdp$ and \new{$\nlopt$}.
\label{table:error}}{{\small
\begin{tabular}{cc|cccccc|c|c|c}
\hline
\texttt{id} & k  & \multicolumn{2}{c}{$\geneig$} & \multicolumn{2}{c}{$\mvbeta$} & \multicolumn{2}{c|}{$\robustsdp$} & \multicolumn{1}{c|}{\new{$\nlopt$}} & \multicolumn{1}{c|}{lower} & \multicolumn{1}{c}{upper} \\
& & bound & time & bound & time & bound & time & \new{bound}  & bound  & bound \\
\hline
\multirow{5}{*}{\texttt{a}}
& 1 & $1.05\text{e--}15$ & 0.63 & $1.85\text{e--}16$ & 0.31 & $3.30\text{e--}14$ & 0.75 & \multirow{5}{*}{\new{$4.80\text{e--}13$}}  & \multirow{5}{*}{$2.28\text{e--}13$}  &  \multirow{5}{*}{$5.33\text{e--}13$} \\
& 2 & $2.84\text{e--}14$ & 1.69 &  $4.07\text{e--}15$ & 20.9 & $7.52\text{e--}14$ & 0.79 & & & \\
& 3  & $5.83\text{e--}14$ & 29.7 & $8.88\text{e--}15$ & 645. & $1.10\text{e--}13$ & 1.06 & & & \\
& 4 & $8.72\text{e--}14$ & >1e4 & $1.73\text{e--}14$ & >1e5 & $1.62\text{e--}13$ & 2.09 & & & \\
& 8 & $-$ & $-$ & $-$ & $-$ & ${3.55\text{e--}13}$ & 164. & & & \\
\hline
\multirow{5}{*}{\texttt{b}}
& 1  & $8.40\text{e--}14$ & 1.10 & $4.99\text{e--}14$ & 1.83 & $3.56\text{e--}12$ & 0.31  & \multirow{5}{*}{\new{$6.40\text{e--}11$}} & \multirow{5}{*}{$2.19\text{e--}11$} &  \multirow{5}{*}{$6.48\text{e--}11$} \\
& 2  & $1.31\text{e--}12$ & 2.75 &  $2.41\text{e--}13$ & 226. & $5.31\text{e--}12$ & 0.42 & & &  \\
& 3  & $2.89\text{e--}12$ & 288. & $5.08\text{e--}13$ & >1e5 & $8.04\text{e--}12$ & 1.04 & & & \\
& 4 &  $-$ & $-$ & $-$ & $-$ & $1.13\text{e--}11$ & 4.11 & & &  \\
& 7  & $-$ & $-$ & $-$ & $-$ & $2.60\text{e--}11$ & 152. & & & \\
\hline
\multirow{4}{*}{\texttt{c}}
& 1  & $9.45\text{e--}15$ & 1.25 & $3.95\text{e--}15$ & 4.34 & $9.68\text{e--}15$ & 0.73 & \multirow{4}{*}{\new{$1.02\text{e--}13$}} &  \multirow{4}{*}{$2.23\text{e--}14$} &  \multirow{4}{*}{$1.18\text{e--}13$}\\
& 2  & $1.64\text{e--}14$ & 37.3 & $7.89\text{e--}15$ & >1e4 & $1.48\text{e--}14$ & 6.69 & & & \\
& 3  & $-$ & $-$  & $-$  & $-$  & $2.62\text{e--}14$  & 172. & & & \\
& 4   & $-$ & $-$ & $-$ & $-$  & $-$  & $-$  & & & \\
\hline
\multirow{4}{*}{\texttt{d}}
& 1 & $3.01\text{e--}14$ & 1.63 & $1.41\text{e--}14$ & 13.6 & $1.49\text{e--}13$ & 1.67 & \multirow{4}{*}{\new{$3.93\text{e--}13$}}  & \multirow{4}{*}{$7.58\text{e--}14$} &  \multirow{4}{*}{$4.47\text{e--}13$}\\
& 2  & $5.38\text{e--}14$ & 163. & $2.45\text{e--}14$ & >4e4 & $2.22\text{e--}13$ & 3.73 & & & \\
& 3 & $-$ & $-$  & $-$  & $-$  & $3.04\text{e--}13$ & 33.3 &  & & \\
& 4  & $-$ & $-$ & $-$ & $-$  & ${4.06\text{e--}13}$  & 275. & & & \\
\hline
\multirow{4}{*}{\texttt{e}}
& 1  & $9.72\text{e--}28$ & 5.74 & $5.55\text{e--}14$ & 86.1 & $2.88\text{e--}13$ & 2.53 & \multirow{4}{*}{\new{$2.01\text{e--}12$}}  & \multirow{4}{*}{$3.03\text{e--}13$} &  \multirow{4}{*}{$2.09\text{e--}12$}\\
& 2 & $-$ & $-$  & $-$  & $-$  & $4.48\text{e--}13$  & 55.2 &  & & \\
& 3  & $-$ & $-$ & $-$ & $-$  & $-$  & $-$  &   & & \\
& 4 & $-$ & $-$  & $-$  & $-$  & $-$  & $-$  &   & & \\
\hline
\multirow{5}{*}{\texttt{f}}
& 1 & $8.34\text{e--}17$ & 0.89 & $1.50\text{e--}17$ & 0.59 & $1.98\text{e--}16$ & 1.28 & \multirow{5}{*}{\new{$5.50\text{e--}16$}} & \multirow{5}{*}{$4.45\text{e--}16$} &  \multirow{5}{*}{$6.03\text{e--}16$}\\
& 2 & $1.52\text{e--}16$ & 2.24 & $4.50\text{e--}17$ & 45.9 & $2.31\text{e--}16$ & 1.29 & & & \\
& 3 & $2.07\text{e--}16$ & 65.5 & $7.95\text{e--}17$ & >1e4 & $2.72\text{e--}16$ & 1.30 & & & \\
& 4 & $-$ & $-$ & $-$ & $-$ & $3.04\text{e--}16$ & 1.30 & & & \\
& 8 & $-$ & $-$ & $-$ & $-$ & ${4.43\text{e--}16}$ & 1.40 & & & \\
\hline
\multirow{6}{*}{\texttt{g}}
& 1 & $1.09\text{e--}16$ & 0.33 & $4.93\text{e--}17$ & 0.04 & $3.99\text{e--}16$ & 1.22 & \multirow{6}{*}{\new{$1.00\text{e--}15$}} & \multirow{6}{*}{$3.34\text{e--}16$} & \multirow{6}{*}{$1.19\text{e--}15$}\\
& 2 & $2.43\text{e--}16$ & 0.97 & $1.18\text{e--}16$ & 0.94 & $4.83\text{e--}16$ & 1.22 & & & \\
& 3 & $3.68\text{e--}16$ & 1.71 & $1.78\text{e--}16$ & 10.6 & $5.62\text{e--}16$ & 1.23 & & & \\
& 4 & $4.72\text{e--}16$ & 7.21 & $2.33\text{e--}16$ & 79.9 & $6.37\text{e--}16$ & 1.24 & & & \\
& 6 & $6.28\text{e--}16$ & 646. & $2.87\text{e--}16$ & >1e4 & $7.85\text{e--}16$ & 1.25 & & & \\
& 8 & $-$ & $-$ & $-$ & $-$ & ${9.30\text{e--}16}$ & 1.28 & & & \\
\hline
\multirow{5}{*}{\texttt{h}}
& 1 & $2.30\text{e--}16$ & 1.52 & $1.29\text{e--}16$ & 0.28 & $4.83\text{e--}16$ & 1.34 & \multirow{5}{*}{\new{$7.10\text{e--}16$}} & \multirow{5}{*}{$4.45\text{e--}16$} &  \multirow{5}{*}{$1.29\text{e--}15$}\\
& 2 & $4.00\text{e--}16$ & 2.87 & $2.55\text{e--}16$ & 28.4 & $5.40\text{e--}16$ & 1.35 & & & \\
& 3 & $5.36\text{e--}16$ & 161. & $3.28\text{e--}16$ & >1e4 & $5.78\text{e--}16$ & 1.39 & & & \\
& 4 & $-$ & $-$ & $-$ & $-$ & $6.13\text{e--}16$ & 1.40 & & & \\
& 7 & $-$ & $-$ & $-$ & $-$ & ${7.00\text{e--}16}$ & 1.50 & & & \\ 
\hline
\multirow{5}{*}{\texttt{i}}
& 1  & $3.07\text{e--}14$ & 0.58 & $1.60\text{e--}14$ & 0.61 & $1.56\text{e--}13$ & 1.32  & \multirow{5}{*}{\new{$1.42\text{e--}12$}}  & \multirow{5}{*}{$1.47\text{e--}13$} &  \multirow{5}{*}{$1.43\text{e--}12$} \\
& 2 & $6.71\text{e--}14$ & 2.48 & $2.68\text{e--}14$ & 42.5 & $2.13\text{e--}13$ & 1.39 & & & \\
& 3 & $1.25\text{e--}13$ & 47.1 & $3.72\text{e--}14$ & >1e4 & $2.84\text{e--}13$ & 1.43 & & & \\
& 4 & $1.92\text{e--}13$ & >1e4 & $5.35\text{e--}14$ & >2e5 & $3.63\text{e--}13$ & 1.62 & & & \\
& 8 & $-$ & $-$ & $-$ & $-$ & ${7.67\text{e--}13}$ & 7.34 & & &  \\
\hline
\end{tabular}
}
}
\end{center}
\end{table}
\begin{table}[!t]
\begin{center}
\tbl{Expected magnitudes of \new{flops counts} for the three procedures $\geneig$, $\mvbeta$ and $\robustsdp$.\label{table:flops}}{{\small
\begin{tabular}{lcccc|ccc}
\hline
{Benchmark} & id &  $n$ & $m$ &  $k$ & $\geneig$  & $\mvbeta$  & $\robustsdp$\\
\hline  
%
\multirow{2}{*}{\texttt{rigidBody1}} & \multirow{2}{*}{\texttt{a}} & \multirow{2}{*}{3}  & \multirow{2}{*}{10}  
& 1 & ${2.75\text{e+}03}$ & ${3.50\text{e+}03}$ & ${6.40\text{e+}04}$ \\
&  &  &  & 8 & ${8.43\text{e+}15}$ & ${1.04\text{e+}12}$ & ${4.50\text{e+}09}$\\
\hline
\multirow{2}{*}{\texttt{rigidBody2}} & \multirow{2}{*}{\texttt{b}} & \multirow{2}{*}{3}  & \multirow{2}{*}{15}  
& 1 & ${6.86\text{e+}03}$ & ${9.99\text{e+}03}$ & ${2.16\text{e+}05}$ \\
&  &  &  & 7 & ${1.12\text{e+}17}$ & ${1.02\text{e+}13}$ & ${5.84\text{e+}09}$\\
\hline
\multirow{2}{*}{\texttt{kepler0}} & \multirow{2}{*}{\texttt{c}} & \multirow{2}{*}{6}  & \multirow{2}{*}{21}  
& 1 & ${2.20\text{e+}04}$ & ${3.12\text{e+}04}$ & ${3.18\text{e+}06}$ \\
&  &  &  & 4 & ${3.12\text{e+}13}$ & ${6.19\text{e+}10}$ & ${8.58\text{e+}10}$\\
\hline
\multirow{2}{*}{\texttt{kepler1}} & \multirow{2}{*}{\texttt{d}} & \multirow{2}{*}{4}  & \multirow{2}{*}{28}  
& 1 & ${3.60\text{e+}04}$ & ${5.83\text{e+}04}$ & ${2.75\text{e+}06}$ \\
&  &  &  & 4 & ${2.05\text{e+}14}$ & ${2.98\text{e+}11}$ & ${7.53\text{e+}09}$\\
\hline
\multirow{2}{*}{\texttt{kepler2}} & \multirow{2}{*}{\texttt{e}} & \multirow{2}{*}{6}  & \multirow{2}{*}{42}  
& 1 & ${1.18\text{e+}05}$ & ${1.96\text{e+}05}$ & ${2.55\text{e+}07}$ \\
&  &  &  & 4 & ${1.99\text{e+}16}$ & ${9.99\text{e+}12}$ & ${6.87\text{e+}11}$\\
\hline
\multirow{2}{*}{\texttt{sineTaylor}} & \multirow{2}{*}{\texttt{f}} & \multirow{2}{*}{1}  & \multirow{2}{*}{13}  
& 1 & ${3.38\text{e+}03}$ & ${5.28\text{e+}03}$ & ${1.76\text{e+}04}$ \\
&  &  &  & 8 & ${3.27\text{e+}16}$ & ${3.45\text{e+}12}$ & ${1.61\text{e+}06}$\\
\hline
\multirow{2}{*}{\texttt{sineOrder3}} & \multirow{2}{*}{\texttt{g}} & \multirow{2}{*}{1}  & \multirow{2}{*}{6}  
& 1 & ${5.12\text{e+}02}$ & ${6.30\text{e+}02}$ & ${1.73\text{e+}03}$ \\
&  &  &  & 8 & ${2.67\text{e+}11}$ & ${4.08\text{e+}08}$ & ${1.58\text{e+}05}$\\
\hline
\multirow{2}{*}{\texttt{sqroot}} & \multirow{2}{*}{\texttt{h}} & \multirow{2}{*}{1}  & \multirow{2}{*}{15}  
& 1 & ${4.92\text{e+}03}$ & ${7.92\text{e+}03}$ & ${2.70\text{e+}04}$ \\
&  &  &  & 7 & ${1.48\text{e+}16}$ & ${2.51\text{e+}12}$ & ${1.73\text{e+}06}$\\
\hline
\multirow{2}{*}{\texttt{himmilbeau}} & \multirow{2}{*}{\texttt{i}} & \multirow{2}{*}{2}  & \multirow{2}{*}{11}  
& 1 & ${2.75\text{e+}03}$ & ${3.87\text{e+}03}$ & ${3.60\text{e+}04}$ \\
&  &  &  & 8 & ${8.43\text{e+}15}$ & ${1.14\text{e+}12}$ & ${1.22\text{e+}08}$\\
\hline
\end{tabular}
}}
\end{center}
\end{table}

Table~\ref{table:flops} compares expected magnitudes of \new{flop counts} for 
$\geneig$, $\mvbeta$ and $\robustsdp$, following from the study at the end of Section~\ref{sec:robustsdp}. 
For each program, we show the cost for the initial relaxation order $k = 1$ as well as for the highest one used for error computation in Table~\ref{table:error}. The results indicate that we can expect the procedure $\geneig$ to be more efficient at low relaxation orders while being outperformed by $\robustsdp$ at higher orders. Besides, the $\mvbeta$ procedure is likely to have performance lying in between the two others. We mention that the interested reader can find more detailed experimental comparisons between the two  relaxation procedures $\geneig$  and $\mvbeta$ in~\cite{mvbeta16}.
\subsection{Numerical Evaluation}
For each benchmark, Table~\ref{table:error} displays the quality of the roundoff error bounds with corresponding execution times. We emphasize that our $\fpsdp$ tool relies on the simple rounding model described in Section~\ref{sec:fp_pb}

Note that a head-to-head comparison between the three SDP relaxation procedures and $\sthreefp$~\cite{Chiang14s3fp} would be more difficult. 
Indeed, $\sthreefp$ relies on several possible heuristic search algorithms and  measures output errors after executing programs written in \texttt{C++} with certain input values.  
The rounding occurring while executing such programs is more likely to fit with an improved model, based for instance on a piecewise constant absolute error bound (see e.g.~\cite[Section 1.2]{toms16} for more explanation about such models). 

\new{We also compare the three procedures based on SDP with the competitive $\nlopt$~\cite{nlopt} software\footnote{\url{http://nlopt.readthedocs.io/en/latest}}. 
Since $l(\x,\e) = \sum_{j=1}^m s_j(\x) e_j$, the maximal absolute value of $l$ on $\X \times [-\varepsilon,\varepsilon]^m$ is equal to the maximum of the function $a(\x) := \varepsilon \sum_{j=1}^m |s_j(\x)|$ on $\X$. For the sake of efficiency, we execute  $\nlopt$ on $a$ with the optimization algorithm NLOPT\_GN\_DIRECT, which is derivative-free so that it can optimize functions with absolute values.
The results returned by $\nlopt$ are not necessarily upper nor lower bounds of $a$ but when the solver stops, it returns a point $\x^\star \in \X$. Then we obtain an interval enclosure of $a(\x^\star)$ in {\sc Matlab} which is a valid lower bound of $a$, thus a valid lower bound of $|l|$.
}
The lower bounds on the absolute error are obtained as in~\cite{Darulova14Popl}, by executing each program on several random inputs satisfying the input restrictions.
For comparison purpose, we also provide the best known upper bounds computed with SDP from~\cite[Table~II]{toms16}. 

\begin{figure}[!t]
\begin{minipage}[t]{0.49\textwidth}
\centering
\includegraphics[scale=\sizefig]{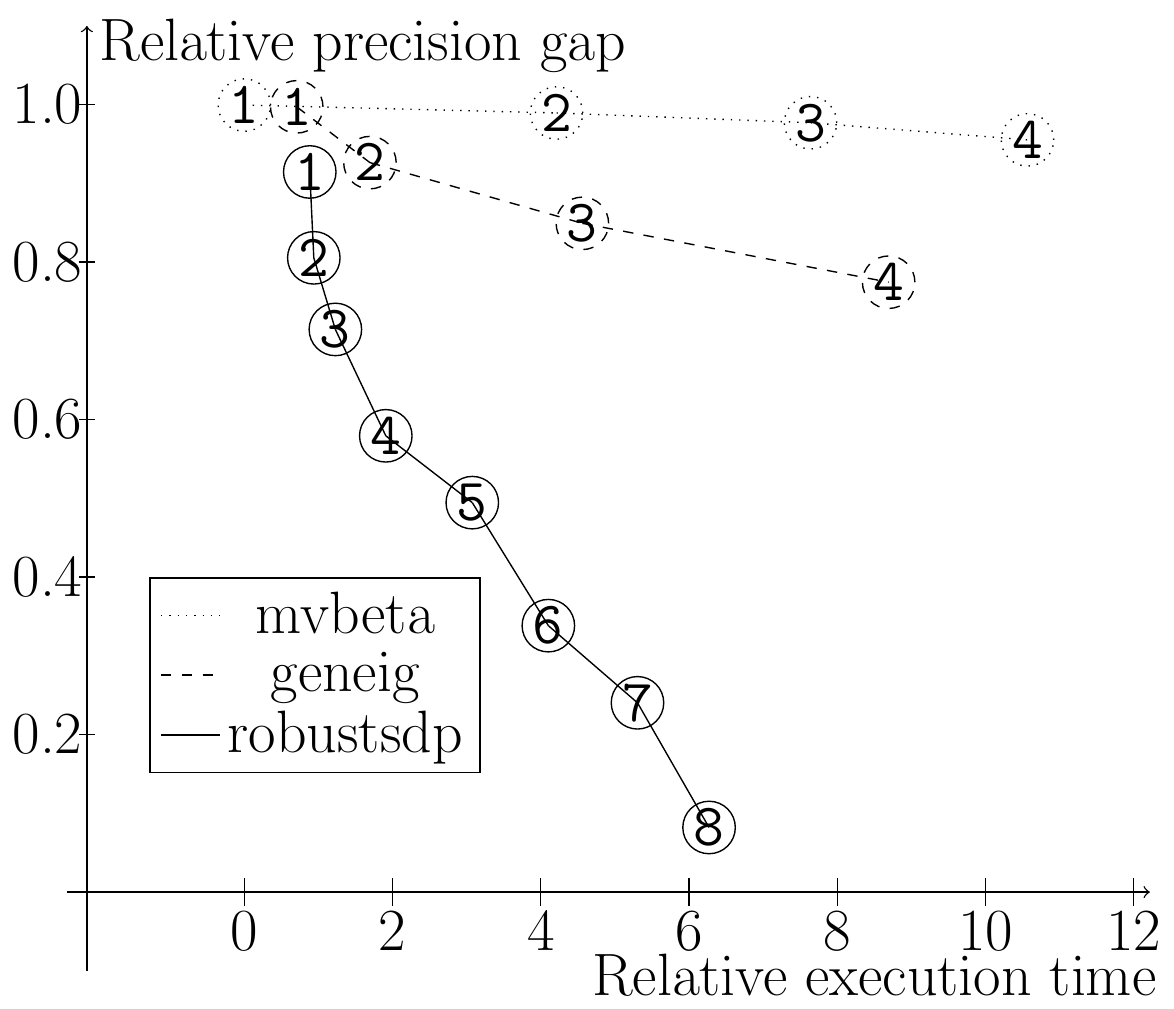}
\\(\texttt{a}) \label{fig:a}
\end{minipage}
\hspace*{\fill}
\begin{minipage}[t]{0.49\textwidth}
\centering
\includegraphics[scale=\sizefig]{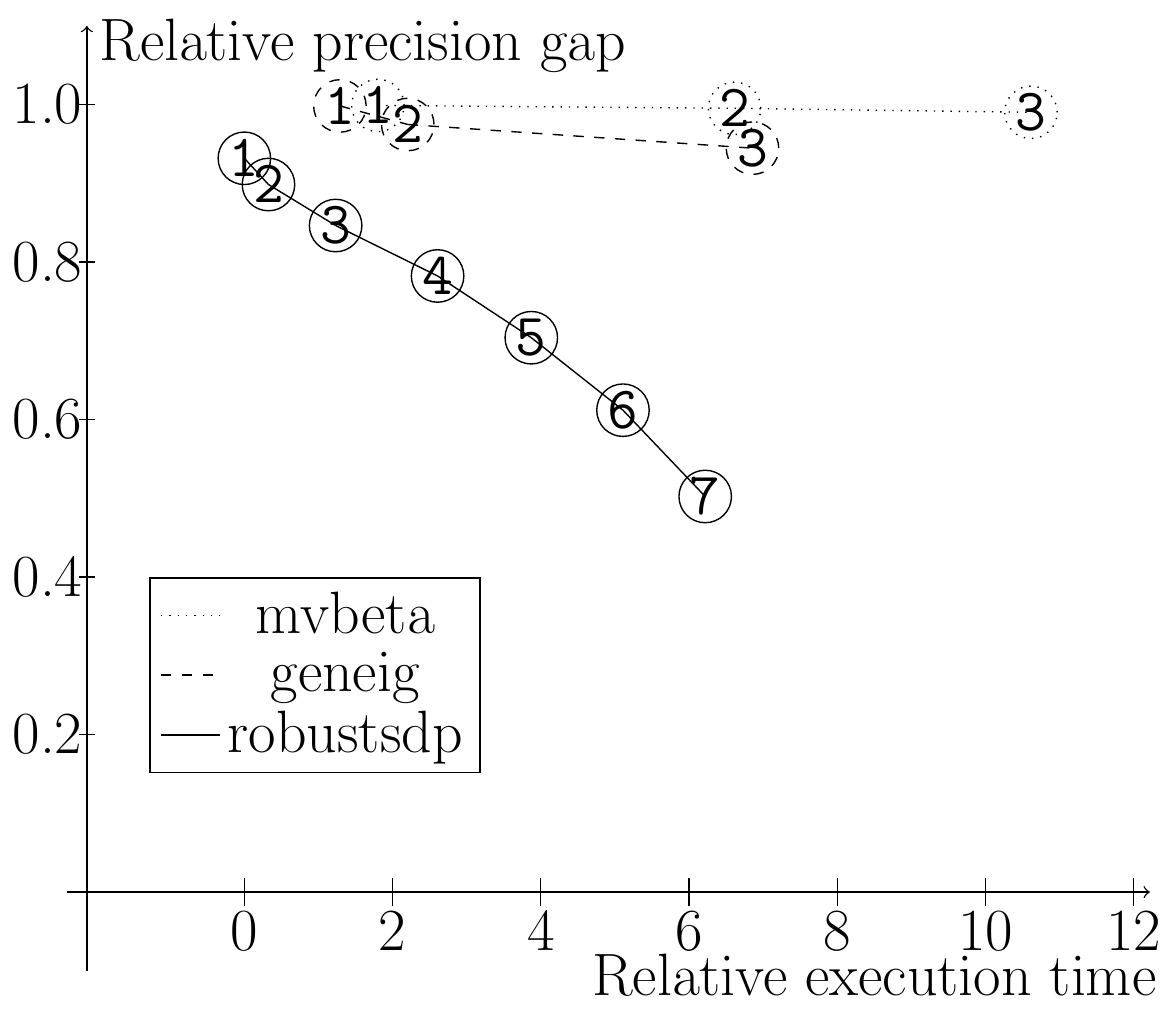}
\\{(\texttt{b})} \label{fig:b}
\end{minipage}
\begin{minipage}[t]{0.49\textwidth}
\centering
\includegraphics[scale=\sizefig]{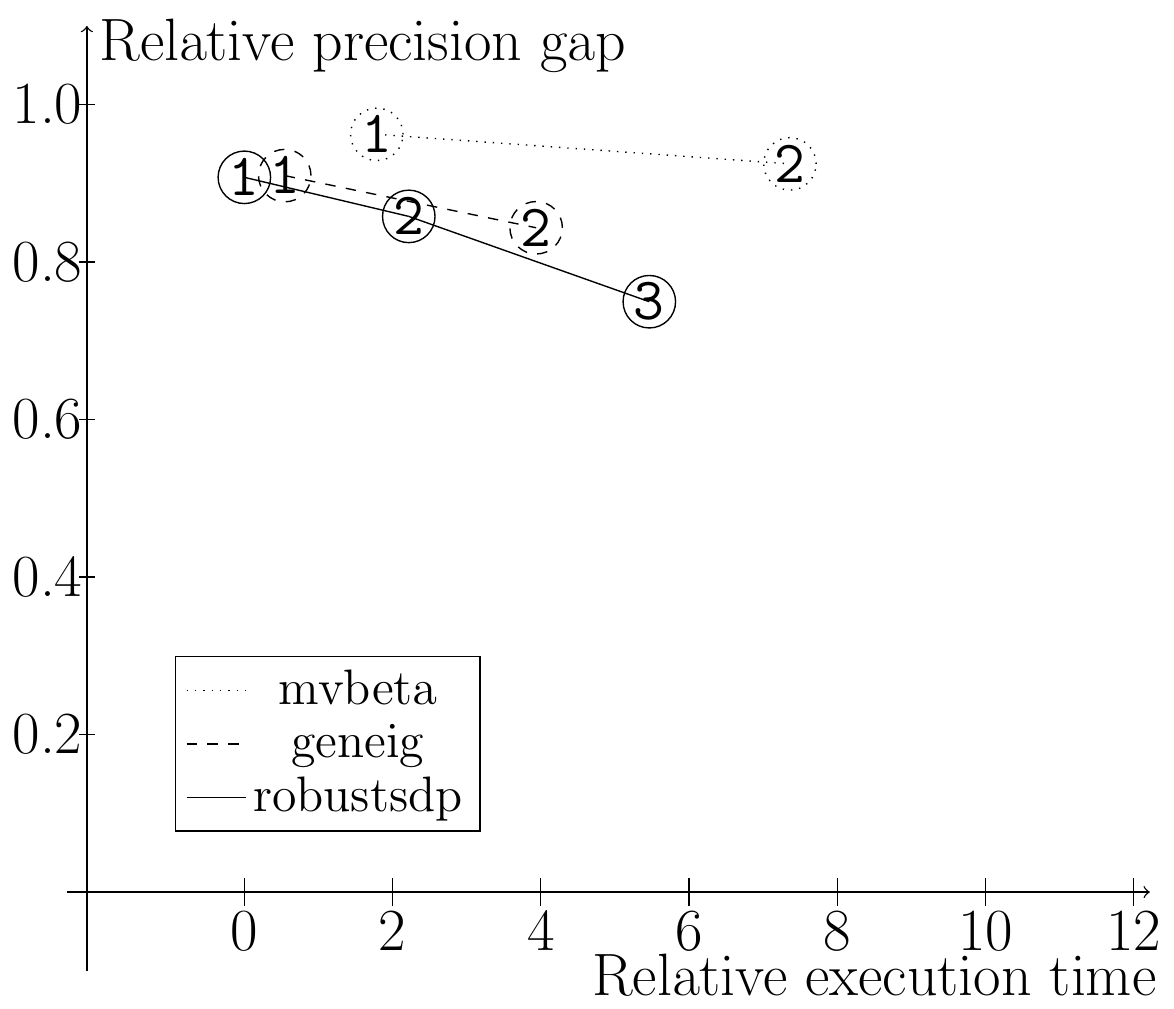}
\\{(\texttt{c})} \label{fig:c}
\end{minipage}
\hspace*{\fill}
\begin{minipage}[t]{0.49\textwidth}
\centering
\includegraphics[scale=\sizefig]{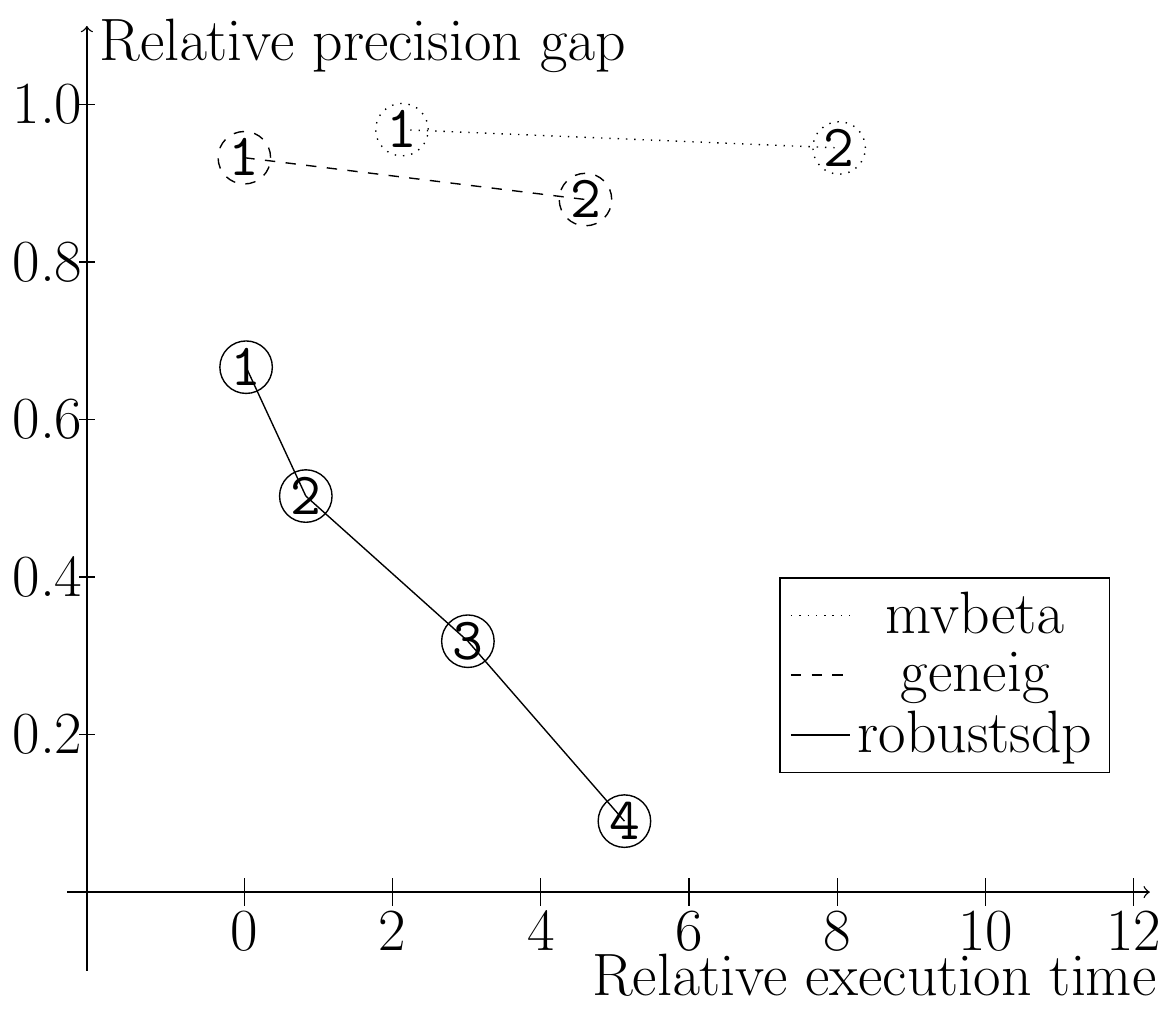}
\\{(\texttt{d})} \label{fig:d}
\end{minipage}
\begin{minipage}[t]{0.49\textwidth}
\centering
\includegraphics[scale=\sizefig]{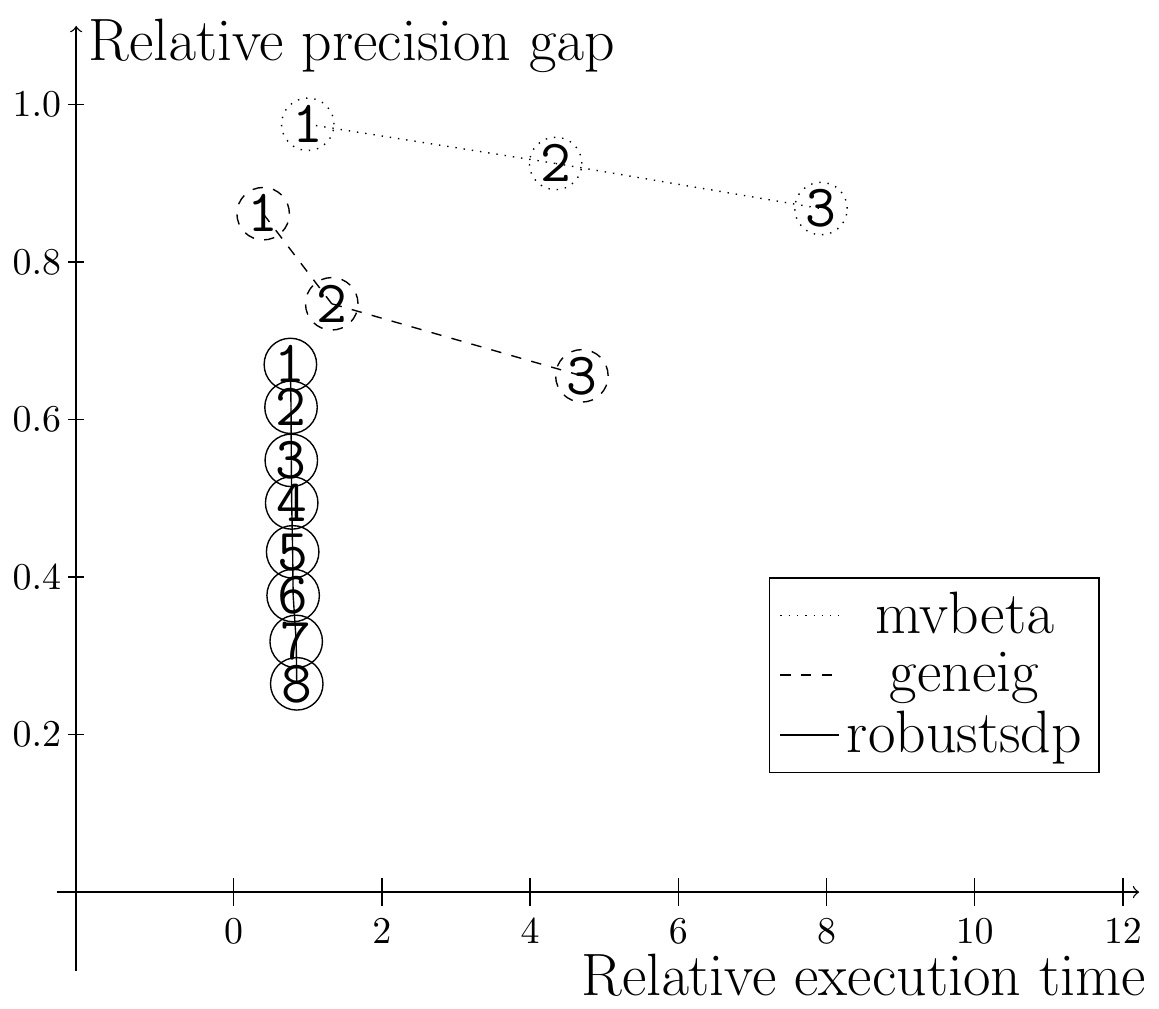}
\\{(\texttt{f})} \label{fig:f}
\end{minipage}
\hspace*{\fill}
\begin{minipage}[t]{0.49\textwidth}
\centering
\includegraphics[scale=\sizefig]{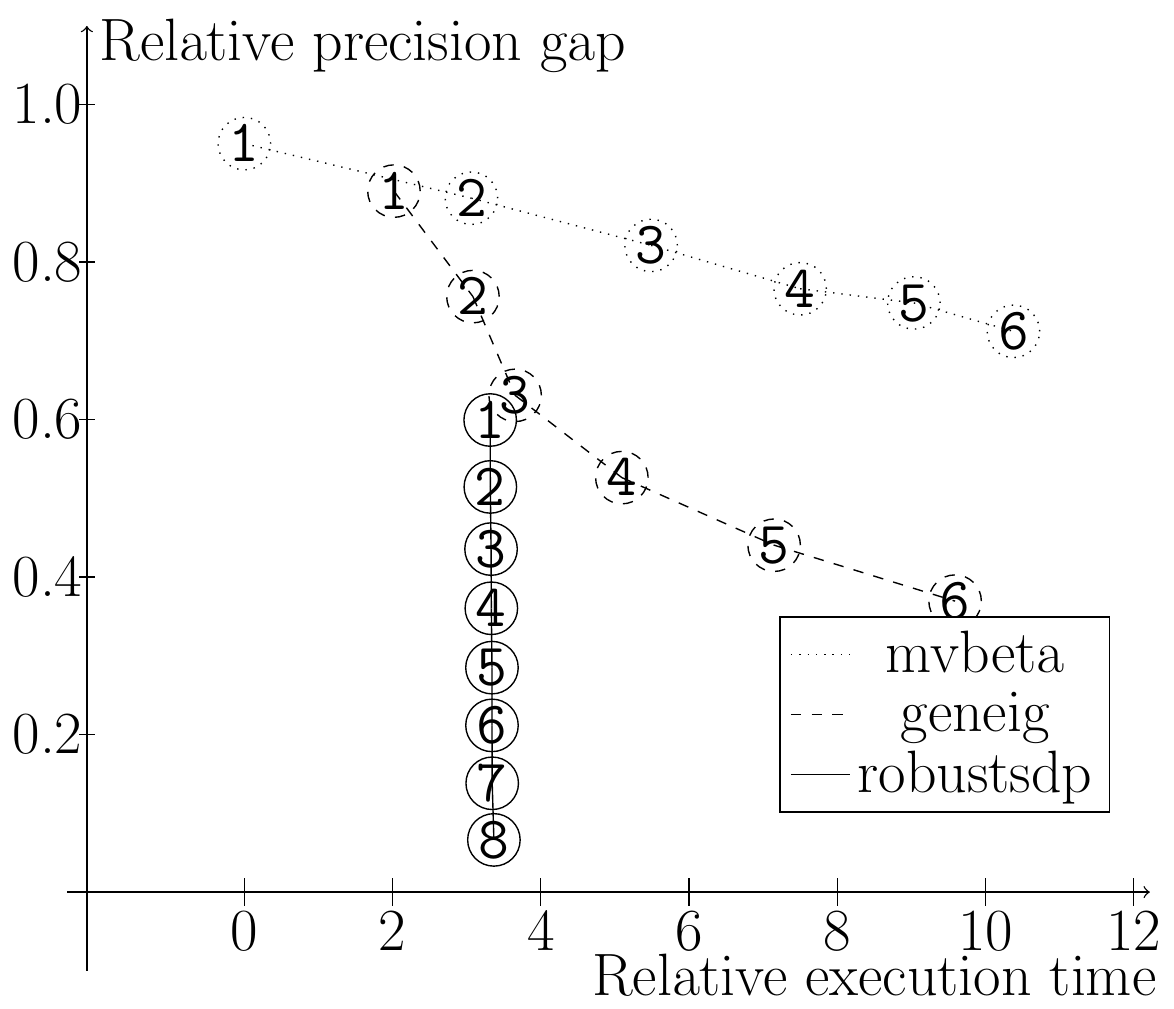}
\\{(\texttt{g})} \label{fig:g}
\end{minipage}
\begin{minipage}[t]{0.49\textwidth}
\centering
\includegraphics[scale=\sizefig]{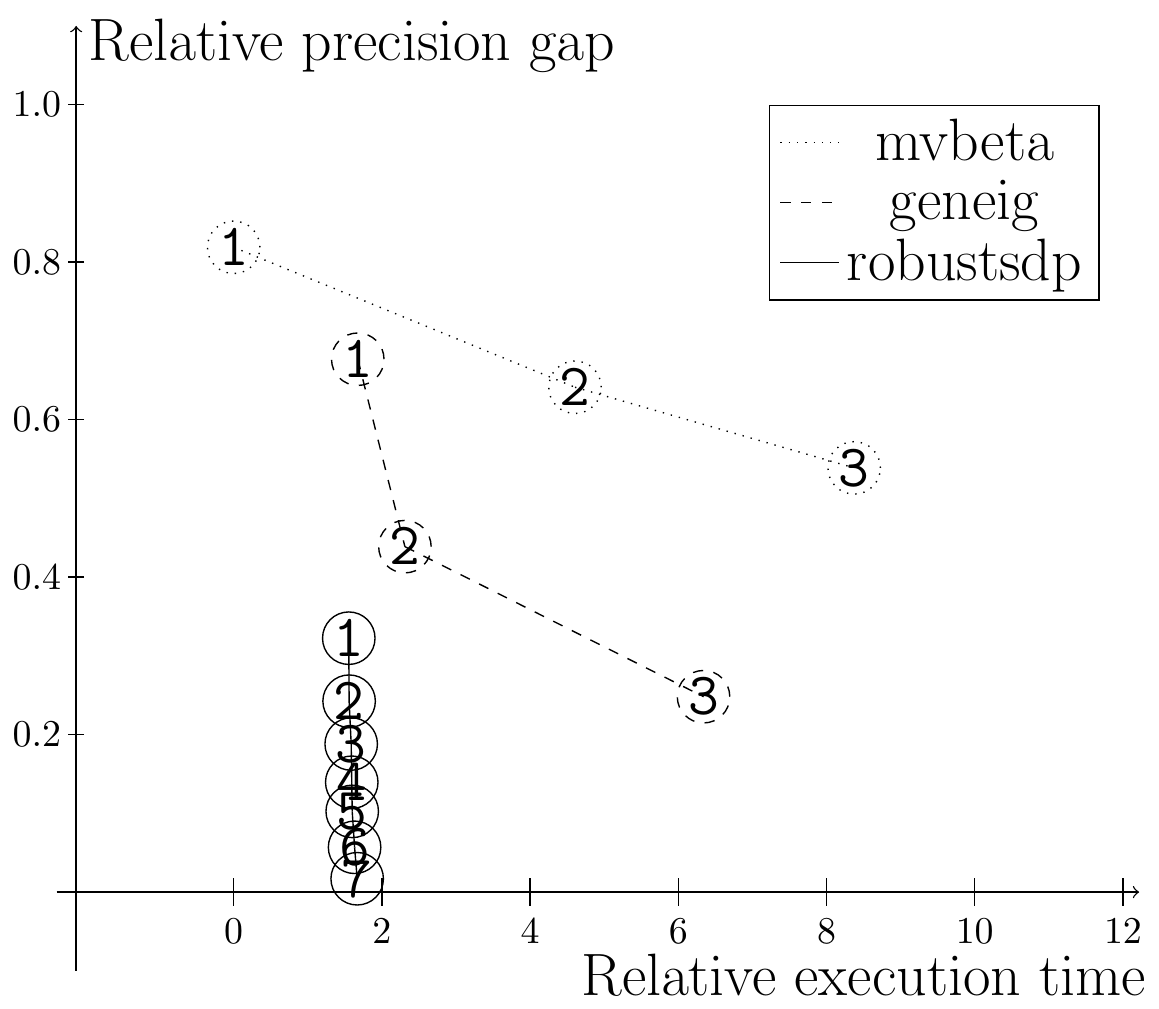}
\\{(\texttt{h})} \label{fig:h}
\end{minipage}
\hspace*{\fill}
\begin{minipage}[t]{0.49\textwidth}
\centering
\includegraphics[scale=\sizefig]{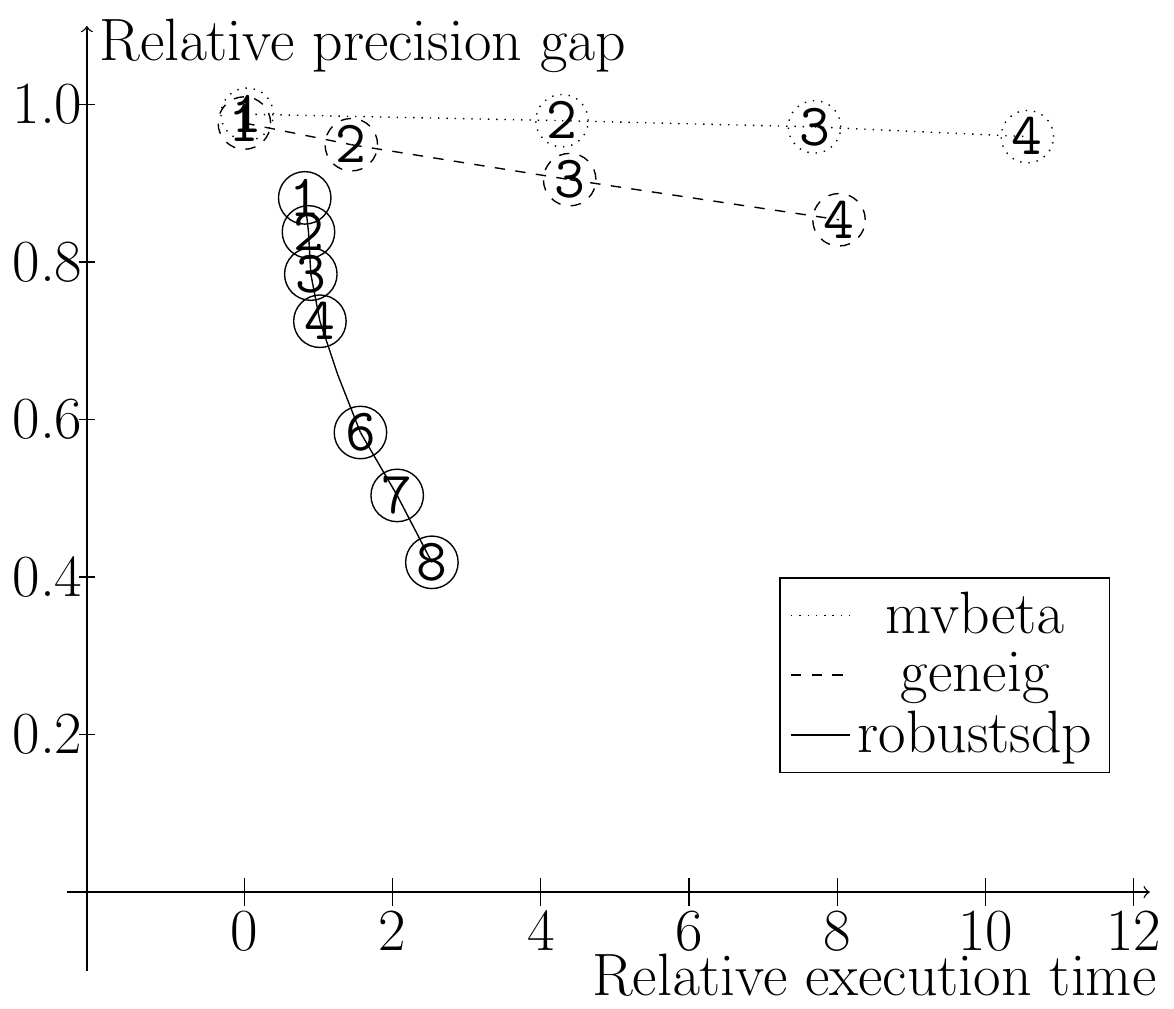}
\\{(\texttt{i})} \label{fig:i}
\end{minipage}
\caption{Relative gap and execution time results for benchmarks.}
\label{fig:graphs} 
\end{figure}

As shown in Table~\ref{table:error}, the $\robustsdp$ procedure is the most accurate among our three hierarchies and provides the tightest enclosure bounds for all programs. 
\new{The $\nlopt$ tool is always faster than all other methods. Our $\robustsdp$ procedure is more precise than $\nlopt$ for the benchmark \texttt{d}. Even though $\nlopt$ is the most accurate procedure for all other benchmarks, $\robustsdp$ often provides similar bounds. In particular, $\robustsdp$ is respectively 7 \% and 2 \% less precise than $\nlopt$ for the benchmarks \texttt{g} and \texttt{h}. 
}
%

For relaxations order greater than 2, 
$\robustsdp$  is faster \new{than the two other hierarchies} for all programs. Except for program \texttt{c}, either $\geneig$ or $\mvbeta$  yields better performance at the first relaxation order. 
%
%
The symbol ``$-$''  in a column entry means that we aborted the execution of the corresponding procedure after running more than $1\text{e}6$ seconds. 
Note that such \new{a} behavior systematically occurs when analyzing programs \texttt{f-i} with $\geneig$ and $\mvbeta$ at maximal relaxation orders. This confirms the expectation results from Table~\ref{table:flops}, as $\robustsdp$  yields more tractable SDP relaxations. 
Note that for these benchmarks, we performed experiments for each intermediate order $k$ between 4 and the maximal indicated one. For conciseness, we have not displayed all intermediate results in the table but use them later on in Figure~\ref{fig:graphs}. One way to increase the performance of the $\fpsdp$ tool would be to vectorize the current code which creates  moment/localizing matrices, instead of writing loop-based code.

The purpose of Figure~\ref{fig:graphs} is to emphasize the ability of $\fpsdp$ to make a compromise between accuracy and \new{performance}.
All program results (except \texttt{e} due to the lack of experimental data) are reported in Figure~\ref{fig:graphs}.  Each value of $k$ corresponds to a circled integer point. 
For each experiment, we define the three execution times $t_\geneig$, $t_\mvbeta$ and $t_\robustsdp$ and  the minimum $t$ among the three values. The x-axis coordinate of the circled point is $\ln\bigl(\frac{t_\geneig}{t}\bigr)$ for the $\geneig$ procedure (and similarly for the other procedures).
The corresponding lower bounds are denoted by $\varepsilon_\geneig$, $\varepsilon_\mvbeta$ and $\varepsilon_\robustsdp$. With $\overline{\varepsilon}$ being the reference upper bound, the y-axis coordinate of the circled point is the relative error gap for $\geneig$, i.e.~$r_\geneig := 1 - \frac{\varepsilon_\geneig}{\overline{\varepsilon}}$ and similarly for the other procedures.

For each $k$, the relative location of the corresponding circled integers indicate which procedure either performs better \new{(by being faster)} or provides more accurate bounds. 
%
In particular for \texttt{i}, $\robustsdp$ is less efficient than the two other procedures for first relaxation orders (relative execution time less than 1.5) then outperforms the other procedures. We also observe that $\mvbeta$ is more efficient at low relaxation orders for programs $\texttt{a}$ and $\texttt{g-i}$ as well as $\geneig$ for programs $\texttt{c-d}$, $\texttt{f}$  and $\texttt{i}$. The  procedure $\geneig$ is always more accurate than $\mvbeta$. 
%
%
%
\section{CONCLUSION AND PERSPECTIVES}
\new{We have presented three procedures, respectively based on hierarchies of generalized eigenvalue problems, elementary computations and semidefinite programming (SDP) relaxations}. These three methods allow to compute lower bounds of roundoff errors for programs implementing polynomials with input variables being box constrained. While the \new{first two} procedures are direct applications of existing methods in the context of polynomial optimization, the third one relies on a new hierarchy of robust SDP relaxations, allowing to tackle specifically the roundoff error problem.  Experimental results obtained with our $\fpsdp$ tool, implementing these three procedures, prove that SDP relaxations are able to provide accurate lower bounds in an efficient way. 

A first direction of further research is the extension of the SDP relaxation framework to programs \new{implementing non-polynomial functions}, with either finite or infinite loops. This requires to derive a hierarchy of inner converging SDP approximations for reachable sets of discrete-time polynomial systems in either finite or infinite horizon. Another topic of interest is the formal verification of lower bounds with a proof assistant such as $\coq$~\cite{CoqProofAssistant}. To achieve this goal, we could benefit from recent formal libraries~\cite{Denes2012} in computational algebra.
\if{
\section*{Acknowledgments}
This work has been partially supported by the LabEx PERSYVAL-Lab (ANR-11-LABX-0025-01) funded by the French program ``Investissement d'avenir'' and by the European Research Council (ERC) ``STATOR'' Grant Agreement nr. 306595.
}\fi


                                  
\appendix
\section*{APPENDIX: POLYNOMIAL PROGRAM BENCHMARKS}
\setcounter{section}{1}
\label{appendix}

\begin{itemize} 
	\item[\texttt{a}] rigibody1 : $(x_1,x_2,x_3) \mapsto -x_1x_2-2x_2x_3-x_1-x_3$ defined on $[-15,15]^3$.
	\item[\texttt{b}] rigibody2 : $(x_1,x_2,x_3) \mapsto 2x_1x_2x_3+6x_3^2-x_2^2x_1x_3-x_2$ defined on $[-15,15]^3$.
	\item[\texttt{c}] kepler0 : $(x_1,x_2,x_3,x_4,x_5,x_6) \mapsto x_2x_5+x_3x_6-x_2x_3-x_5x_6+x_1(-x_1+x_2+x_3-x_4+x_5+x_6)$ defined on $[4,6.36]^6$.
	\item[\texttt{d}] kepler1 : $(x_1,x_2,x_3,x_4) \mapsto x_1x_4(-x_1+x_2+x_3-x_4)+x_2(x_1-x_2+x_3+x_4)+x_3(x_1+x_2-x_3+x_4)-x_2x_3x_4-x_1x_3-x_1x_2-x_4$ defined on $[4,6.36]^4$.
	\item[\texttt{e}] kepler2 : $(x_1,x_2,x_3,x_4,x_5,x_6) \mapsto x_1x_4(-x_1+x_2+x_3-x_4+x_5+x_6)+x_2x_5(x_1-x_2+x_3+x_4-x_5+x_6)+x_3x_6(x_1+x_2-x_3+x_4+x_5-x_6)-x_2x_3x_4-x_1x_3x_5-x_1x_2x_6-x_4x_5x_6$ defined on $[4,6.36]^6$.
	\item[\texttt{f}] sineTaylor : $x \mapsto x-\frac{x^3}{6.0}+\frac{x^5}{120.0}-\frac{x^7}{5040.0}$ defined on $[-\frac{\tilde{\pi}}{2},\frac{\tilde{\pi}}{2}]$, with $\frac{\tilde{\pi}}{2} := 1.57079632679$.
	\item[\texttt{g}] sineOrder3 : $x \mapsto 0.954929658551372 x-0.12900613773279798 x^3$ defined on $[-2,2]$.
	\item[\texttt{h}] sqroot : $x \mapsto 1.0+0.5x-0.125x^2+0.0625x^3-0.0390625x^4$ defined on $[0,1]$.
	\item[\texttt{i}] himmilbeau : $(x_1,x_2) \mapsto (x_1^2+x_2-11)^2+(x_1+x_2^2-7)^2$ defined on $[-5,5]^2$.
	
\end{itemize} 
\bibliographystyle{ACM-Reference-Format-Journals}
\bibliography{lowerroundsdp}

\end{document}